\documentclass[11pt]{article}

\usepackage{amsmath,amssymb,amsthm,graphicx,subfigure,float,url,mathrsfs}

\usepackage{pdfsync}

\usepackage[usenames]{color}
\usepackage{fullpage}

\topmargin -1.5cm
\textheight 23cm
\textwidth 17cm 
\oddsidemargin 0cm

\newcommand{\R} {\mathbb{R}}

\newcommand{\Vf}{\mathcal{V}_{f}}   
\newcommand{\Vperp}{\mathcal{V}_{\perp}} 
\newcommand{\RN}{(\mathbb{R}^{d})^{N}} 

\renewcommand{\geq}{\geqslant}
\renewcommand{\leq}{\leqslant}

\newtheorem{theorem}{Theorem}  
\newtheorem{proposition}{Proposition}
\newtheorem{corollary}{Corollary}
\newtheorem{definition}{Definition}
\newtheorem{lemma}{Lemma}
\newtheorem{example}{Example}
\theoremstyle{definition}\newtheorem{remark}{Remark}

\begin{document}

\title{Sparse Stabilization and Control of Alignment Models}
\author{Marco Caponigro\footnote{Conservatoire National des Arts et M\'etiers, \'Equipe M2N,
292 rue Saint-Martin, 75003, Paris, France. ({\tt marco.caponigro@cnam.fr})},
Massimo Fornasier\footnote{Technische Universit\"at M\"unchen, Facult\"at Mathematik, Boltzmannstrasse 3
D-85748, Garching bei M\"unchen, Germany  ({\tt massimo.fornasier@ma.tum.de}).}, 
Benedetto Piccoli\footnote{Rutgers University, Department of Mathematics, Business $\&$ Science Building Room 325 Camden, NJ 08102, USA ({\tt piccoli@camden.rutgers.edu}).},
Emmanuel Tr\'elat\footnote{Universit\'e Pierre et Marie Curie (Univ. Paris 6) and Institut Universitaire de France,
CNRS UMR 7598, Laboratoire Jacques-Louis Lions, F-75005, Paris, France ({\tt emmanuel.trelat@upmc.fr}).}}
\maketitle

\begin{abstract}
Starting with the seminal papers of Reynolds (1987), Vicsek et. al. (1995), Cucker--Smale (2007)
there has been a flood of recent works on models of self-alignment and consensus dynamics.
Self-organization has been so far the main driving concept of this research direction.
However, the evidence that in practice self-organization does not necessarily occur
(for instance, the achievement of unanimous consensus in government decisions) leads
to the natural question of whether it is possible to {\it externally} influence the dynamics in order to
promote the formation of certain desired patterns.
Once this fundamental question is posed, one is also faced with the issue of defining
the best way of obtaining the result, seeking for the most ``economical'' way to achieve a certain
outcome. Our paper precisely addressed the issue of finding the sparsest control strategy in
order to lead us optimally towards a given outcome, in this case the
achievement of a state where the group will be able by self-organization to reach
an alignment consensus.  As a consequence we provide a mathematical justification to the general principle according to which ``sparse is better'' in the sense that a policy maker, who is not allowed to predict future developments, should always consider more favorable to intervene with stronger action on the fewest possible instantaneous optimal leaders rather than trying to control more agents with minor strength in order to achieve group consensus. 
We then establish local and global sparse controllability properties to consensus. 
 Finally, we analyze the sparsity of solutions of the finite time optimal control problem where the minimization criterion is a combination of the distance from consensus and of the $\ell_1$-norm of the control. Such an optimization  models the situation where the policy maker is actually allowed to observe future developments. We show that the lacunarity of sparsity is related to the codimension of certain manifolds in the space of cotangent vectors.
\end{abstract}

\noindent
{\bf Keywords:} Cucker--Smale model, consensus emergence, $\ell_1$-norm minimization, optimal complexity,  sparse stabilization, sparse optimal control. \\

\noindent {\bf MSC 2010:} 
34D45, 
35B36, 
49J15, 
65K10, 
93D15, 
93B05 

\tableofcontents

\section{Introduction}
\subsection{Self-organization Vs organization via intervention}

In recent years there has been a very fast growing interest in defining and analyzing mathematical models of multiple interacting agents in social dynamics.
Usually individual based models, described by suitable dynamical systems, constitute the basis for developing  continuum descriptions of the agent distribution, governed by suitable partial differential equations.
There are many inspiring applications, such as animal behavior, where the coordinated movement of groups, such as birds (starlings, geese, etc.),
fishes (tuna, capelin, etc.), insects (locusts, ants, bees, termites, etc.) or certain mammals (wildebeasts, sheep, etc.) is considered, see, e.g., \cite{BCCCCGLOPPVZ09, CDFSTB03, CouFra02, CKFL05, Niw94, PE99, ParVisGru02, Rom96,TonTu95,vicsek} or  the review chapter \cite{cafotove10}, and the numerous references therein. Models in microbiology, such as the Patlak-Keller-Segel model \cite{kese70,pa53}, describing the chemotactical aggregation of cells and multicellular micro-organisms, inspired a very rich mathematical
literature \cite{ho03,ho04,be07}, see also the very recent work \cite{blcaca12} and references therein. Human motion, including pedestrian and crowd modeling \cite{crpito10,crpito11,realistic-crowd, traffic-instabilities}, for instance in evacuation process simulations, has been a matter of intensive research, connecting
also with new developments such as mean field games, see \cite{lawo11} and the overview in its Section 2. Certain aspects of human social behavior, as in language evolution \cite{CS,CucSmaZho04,KeMinAuWan02} or even criminal activities \cite{ MR2438215}, are also subject of intensive study by means of dynamical systems and kinetic models. Moreover, relevant results appeared in the economical realm with the theoretical derivation of wealth distributions \cite{dumato08} and, again in connection with game theory, the description of formation of volatility in financial markets \cite{lali07}. 
Beside applications where biological agents, animals and micro-(multi)cellular organisms, or humans are involved, also more abstract modeling of interacting automatic units, 
for instance simple robots, are of high practical interest \cite{ChuHuaDorBer07,  MR2000132, SugSan97,LeoFio01,PerGomElo09,SPL07a}. \\
One of the leading concepts behind the modeling of multiagent interaction in the past few years has been \textit{self-organization} \cite{CDFSTB03, Niw94, PE99,ParVisGru02,TonTu95}, which, from a mathematical point of
view, can be described as the formation of patterns, to which the systems tend naturally to be attracted. The fascinating mechanism to be revealed by such a modeling is
how to connect the microscopical and usually binary rules or social forces of interaction between individuals  with the eventual global behavior or group pattern,
forming as a superposition in time of the different microscopical effects.  Hence, one of the interesting issues of such socio-dynamical models is the global convergence to stable patterns or, as more often and more realistically, the instabilities and local convergence \cite{be07}.
\\
While the description of pattern formation can explain some relevant real-life behaviors, it is also of paramount interest how one may enforce and stabilize pattern formation in those situations where global and stable
convergence cannot be ensured, especially in presence of noise \cite{YEECBKMS09}, or, vice versa, how one can avoid certain rare and dangerous patterns to form, despite that the system may suddenly tend stably to them. The latter situations may refer, for instance,
to the so-called ``black swans'', usually referred to critical (financial or social) events \cite{beheto12,ta10}. In all these situations where the independent behavior of the system, despite its natural tendencies, does not realize the desired result, the active intervention of an external policy maker is essential. This naturally raises the question of which optimal policy should be considered.  \\
In information theory, the best possible way of representing data is usually the most economical for reliably or robustly storing and communicating data. One of the modern ways of describing economical description of data is their {\it sparse} representation with respect to an adapted {\it dictionary} \cite[Chapter 1]{ma09}. In this paper we shall translate these concepts to realize {\it best policies} in stabilization and control of dynamical systems modeling multiagent interactions.
Beside stabilization strategies in collective behavior already considered in the recent literature, see e.g. \cite{ammemema09,SPL07a}, the conceptually closest work to our approach is perhaps the seminal paper \cite{LeoFio01}, where externally driven ``virtual leaders'' are inserted in a collective motion dynamics in order to enforce a certain behavior. Nevertheless our modeling still differs significantly from this mentioned literature, because we allow us directly, externally, and instantaneously to control the individuals of the group, with no need of introducing predetermined virtual leaders, and we shall specifically seek for the most economical (sparsest) way of leading the group to a certain behavior.
In particular, we will mathematically model {\it sparse controls}, designed to promote the minimal amount of intervention of an external policy maker, in order to enforce  nevertheless the formation of certain interesting patterns. In other words we shall activate in time the minimal amount of parameters, potentially limited to certain admissible classes, which can provide a certain verifiable outcome of our system. The relationship between parameter choices and result will be usually highly nonlinear, especially for several known dynamical systems, modeling social dynamics. Were this relationship linear instead, then a rather well-established theory would predict how many degrees of freedom are minimally necessary to achieve the expected outcome, and, depending on certain spectral properties of the linear model, allows also for efficient algorithms to compute them.  This theory is known in mathematical signal processing under the name of {\it compressed sensing}, see the seminal work \cite{carota06-1} and \cite{do06-2}, see also the review chapter \cite{fora10}.
The major contribution of these papers was
to realize that one can combine the power of convex optimization, in particular $\ell_1$-norm minimization, and spectral properties of random linear models in order to show optimal results on the ability of $\ell_1$-norm minimization of recovering robustly sparsest solutions.  
Borrowing a leaf from compressed sensing, we will model sparse stabilization and control strategies  by penalizing
the class of vector valued controls $u=(u_1,\dots, u_N) \in (\mathbb R^d)^N$ by means of a mixed $\ell_1^N-\ell_2^d$-norm, i.e.,
$$
\sum_{i=1}^N \| u_i \|,
$$
where here $\| \cdot \|$ is the $\ell_2^d$-Euclidean norm on $\mathbb R^d$.
This mixed norm has been used for instance in \cite{fora08} as a \textit{joint sparsity} constraint
and it has the effect of optimally sparsifying multivariate vectors in compressed sensing problems \cite{elra10}.
The use of (scalar) $\ell_1$-norms to penalize controls dates back to the 60's with the models of linear fuel consumption \cite{crlo65}. 
More recent work in dynamical systems \cite{voma06} resumes again $\ell_1$-minimization emphasizing its sparsifying power. 
Also in optimal control with partial differential equation constraints it became rather popular to use $L_1$-minimization to enforce sparsity of controls, for instance
in the modeling of optimal placing of actuators or sensors \cite{caclku12,clku11,clku12,hestwa12,pive12,st09,wawa11}. \\
Differently from this previously mentioned work, we will investigate in this paper optimally sparse stabilization
and control to enforce pattern formation or, more precisely, convergence to attractors in dynamical systems modeling multiagent interaction.
A simple, but still rather interesting and prototypical situation  is given by the individual based
particle system we are considering here as a particular case
\begin{equation}
\left\{
\begin{split}
\dot{x}_i&=v_i\\
\dot{v}_i&=\frac{1}{N}\sum_{j=1}^N \frac{v_j-v_i}{(1+\Vert x_j-x_i\Vert^2)^\beta}
\end{split}
\right.
\end{equation}
for $i=1,\ldots,N$, where $\beta>0$ and  $x_i\in\R^d$, $v_i\in\R^d$ are the {\it state and consensus parameters} respectively.  Here one may want to imagine that the $v_i$'s actually represent abstract quantities such as words of a communication language, opinions, invested capitals, preferences, but also more classical physical quantities such as the velocities in a collective motion dynamics. 
 This model describes the {\it emerging of consensus} in a group of $N$ interacting
agents described by $2d$ degrees of freedom each, trying to align the consensus parameters $v_{i}$ (also in terms of abstract consensus) with their social neighbors. One
of the motivations of this model proposed by Cucker and Smale was in fact to describe the formation
and evolution of languages \cite[Section 6]{CS}, although, due to its simplicity, it has been
eventually related mainly to the description of the emergence of {\it consensus} in a group of moving agents, for instance flocking in a swarm of birds \cite{cusm07}.
One of the interesting features of this simple system is its rather complete analytical description in terms
of its ability of convergence to attractors according to the parameter $\beta>0$ which is ruling the {\it communication rate} between
far distant agents. For $\beta \leq \frac{1}{2}$, corresponding to a still rather strong long - social - distance interaction, for every initial condition the system will converge to a consensus
pattern, characterized by the fact that all the parameters $v_i(t)$'s will tend for $t \to +\infty$ to the mean 
$\bar v = \frac{1}{N} \sum_{i=1}^N v_i(t)$ which is actually an invariant of the dynamics. 
For $\beta > \frac{1}{2}$, the emergence of consensus happens only under certain configurations of state variables and consensus parameters, i.e., when the group is sufficiently close to its state center of mass or when the consensus parameters are sufficiently
close to their mean. Nothing instead can be said a priori when at the same time one has
$\beta > \frac{1}{2}$ and the mentioned conditions on the initial data are not fulfilled. Actually one can easily construct counterexamples to formation of
consensus, see our Example \ref{ex:nonflock} below.
In this situation, it is interesting to consider  external control strategies which will facilitate the formation of consensus, which is precisely the scope of this work.

\subsection{The general Cucker--Smale model and introduction to its control}\label{sec:CSmodel}
Let us introduce the more general Cucker--Smale model under consideration in this article.

\paragraph{The model.}
We consider a system of $N$ interacting agents. The state of each agent is described by a pair $(x_{i}, v_{i})$ of vectors of the Euclidean space $\R^{d}$, where $x_{i}$ represents the \emph{main state} of an agent and the $v_{i}$ its \emph{consensus parameter}. 
 The main state of the group of $N$ agents is given by the $N$-uple $x=(x_{1}, \ldots, x_{N})$. Similarly for the consensus parameters $v=(v_{1},\ldots,v_{N})$. The space of main states and the space of consensus parameters is $\RN$ for both, the product $N$-times of the  Euclidean space $\R^{d}$ endowed with the induced inner product structure. 

The time evolution of the state $(x_i,v_i)$ of the $i^\textrm{th}$ agent is governed by the equations
\begin{equation}\label{system}
\left\{
\begin{split}
\dot{x}_i(t)&=v_i(t), \\
\dot{v}_i(t)&=\frac{1} {N}\sum_{j=1}^N a(\|x_{j}(t) - x_{i}(t)\|)(v_j(t)-v_i(t)),\end{split}
\right. 
\end{equation}
for every $i=1,\ldots,N$, where $a \in C^{1}([0,+\infty))$ is a {\it nonincreasing positive function}.
Here, $\Vert\cdot\Vert$ denotes again the $\ell_2^d$-Euclidean norm in $\R^d$. 
The meaning of the second equation is that each agent adjusts its consensus parameter with those of other agents in relation with a weighted average of the differences. The influence of the $j^\textrm{th}$ agent on the dynamics of the $i^\textrm{th}$ is a function of the (social) distance of the two agents.  Note that the mean consensus parameter $ \bar v = { \frac{1}{N} } \sum_{i=1}^{N} v_{i}(t)$ is an invariant of the dynamics, hence it is constant in time.

As mentioned previously, an example of a system of the form~\eqref{system} is the influential model of Cucker and Smale~\cite{CS} in which the function $a$ is of the form
\begin{equation}\label{eq:CSpotential}
a(\|x_{j} - x_{i}\|) = \frac{K}{(\sigma^{2}+\|x_{i} - x_{j}\|^{2})^{\beta}},
\end{equation}
where $K>0$, $\sigma > 0$, and $\beta \geq 0$ are constants accounting for the social properties of the group of agents.

In matrix notation, System~\eqref{system} can be written as
\begin{equation}\label{systemmatrix}
\left\{
\begin{split}
\dot x &= v \\
\dot v &= - L_{x} v,
\end{split}
\right.
\end{equation} 
where $L_{x}$ is the Laplacian%
\footnote{Given a real $N\times N$ matrix $A = (a_{ij})_{i,j}$ and $v\in \RN$ we denote by $Av$ the action of $A$ on $\RN$  by mapping $v$ to $(a_{i1}v_{1} + \cdots + a_{iN}v_{N})_{i=1,\ldots,N}$. Given a nonnegative symmetric $N \times N$ matrix  $A = (a_{ij})_{i,j}$, the \textit{Laplacian} $L$ of $A$ is defined by $L = D - A$, with $D = \mathrm{diag} (d_{1}, \ldots, d_{N})$ and $d_{k} = \sum_{j=1}^{N} a_{kj}$.}
of the $N\times N$ matrix $\left( a(\Vert x_j-x_i\Vert)/N\right)_{i,j=1}^{N}$ and depends on $x$. The Laplacian $L_{x}$ is an $N \times N$ matrix acting on $\RN$, and verifies $L_{x}(v, \ldots, v) = 0$ for every $v \in \R^{d}$. Notice that the operator $L_{x}$ always is positive semidefinite.

\paragraph{Consensus.}
For every $v \in \RN$, we define the mean vector $\bar v = \frac{1}{N} \sum_{i=1}^{N} v_{i}$ and the symmetric bilinear form $B$ on $\RN\times\RN$ by
$$
B(u,v) = \frac{1}{2N^{2}}\sum_{i,j=1}^N \langle u_{i} - u_{j}, v_{i} - v_{j} \rangle = \frac{1}{N}  \sum_{i=1}^{N}\langle u_{i}, v_{i}\rangle - \langle \bar u, \bar v  \rangle,
$$
where $\langle \cdot, \cdot \rangle$ denotes the scalar product in $\R^{d}$. 
We set
\begin{equation}\label{defVF}
\Vf =\{(v_1,\ldots,v_N) \in \RN\ \vert\ v_1= \dots =v_N\},
\end{equation}
\begin{equation}\label{defVperp}
\Vperp = \{(v_{1},\ldots,v_{N}) \in \RN\ \vert\  \sum_{i=1}^{N}v_{i}=0\}.
\end{equation}
These are two orthogonal subspaces of $\RN$.
Every $v \in \RN$ can be written as 
$v = v_{f} + v_{\perp}$ with $v_{f}  = (\bar v, \ldots, \bar v) \in \Vf$
and $v_{\perp} \in \Vperp$.

Note that $B$ restricted to $\Vperp \times \Vperp$ coincides, up to the factor $1/N$, with  the scalar product on $\RN$.
Moreover $B(u,v) = B(u_{\perp}, v) =  B(u,v_{\perp}) = B(u_{\perp},v_{\perp})$. Indeed $B(u,v_{f}) =0 = B(u_{f},v)$ for every $u,v \in \RN$.

Given a solution $(x(t),v(t))$ of~\eqref{system} we define the quantities
$$
X(t) := B(x(t),x(t)) = \frac{1}{2N^{2}} \sum_{i,j=1}^N \|x_{i}(t) - x_{j}(t)\|^{2},
$$
and
$$
V(t) := B(v(t),v(t)) = \frac{1}{2N^{2}} \sum_{i,j=1}^N \|v_{i}(t) - v_{j}(t)\|^{2} = \frac{1}{N} \sum_{i=1}^{N}\|v(t)_{\perp_{i}}\|^{2}.
$$
Consensus is a state in which  all agents have the same consensus parameter.

\begin{definition}[Consensus point]\label{def:consensuspoint}
A steady configuration of System \eqref{system} $(x,v) \in \RN\times \Vf$ is called a \emph{consensus point} in the sense that the dynamics originating from $(x,v)$ is simply given by rigid translation $x(t) = x + t \bar v$. We call $\RN\times \Vf$ the \emph{consensus manifold}.

\end{definition}

\begin{definition}[Consensus]\label{def:flocking}
We say that a solution $(x(t), v(t))$ of System \eqref{system} tends to \emph{consensus} if the consensus parameter vectors tend to the mean  $\bar v =\frac{1}{N} \sum_{i}v_{i}$, namely if $\lim_{t \to \infty} |v_{i}(t) - \bar v|=0$ for every $i=1,\ldots,N$.  
\end{definition}

\begin{remark}\label{rem:1}
Because of uniqueness, a solution of \eqref{system} cannot reach consensus within finite time, unless the initial datum is already a consensus point.  The consensus manifold is invariant for \eqref{system}.
\end{remark}

\begin{remark}\label{rk:flockequiv}
The following definitions of \emph{{consensus}} are equivalent:
\begin{itemize}
\item[$(i)$]  $\lim_{t \to \infty} v_{i}(t) = \bar v$ for every $i=1,\ldots,N$;
\item[$(ii)$] $\lim_{t \to \infty} v_{\perp_{i}}(t) = 0$ for every $i=1,\ldots,N$;
\item[$(iii)$] $\lim_{t \to \infty} V(t) =0$.
\end{itemize}
\end{remark}

The following lemma, whose proof is given in the Appendix, shows that actually $V(t)$ is a Lyapunov functional for the dynamics of \eqref{system}.

\begin{lemma} \label{lem:app1}
For every $t \geq 0$
$$
\frac{d}{dt}V(t) \leq - 2 a \left(\sqrt{2NX(t)}\right) V(t).
$$
In particular if $\sup_{t\geq 0}X(t) \leq \bar X$ then $\lim_{t \to \infty} V(t) =0$.
\end{lemma}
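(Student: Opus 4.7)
My plan is to show the announced differential inequality by differentiating $V$ along trajectories, symmetrizing the resulting sum, and then invoking the monotonicity of $a$ together with a sharp geometric estimate.

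First, I would differentiate $V(t)=\frac{1}{2N^2}\sum_{i,j}\|v_i-v_j\|^2$ and substitute the dynamics $\dot v_i=\frac{1}{N}\sum_j a(\|x_i-x_j\|)(v_j-v_i)$. Because $\sum_j(v_i-v_j)=N(v_i-\bar v)$ collapses one index, and because $a(\|x_i-x_j\|)=a(\|x_j-x_i\|)$ allows a further symmetrization under $(i,j)\leftrightarrow(j,i)$, the computation reduces (after cancelling the contribution of $\bar v$ via $\sum_i\dot v_i=0$) to the exact identity
\[
\dot V(t)=-\frac{1}{N^2}\sum_{i,j=1}^{N}a(\|x_i(t)-x_j(t)\|)\,\|v_i(t)-v_j(t)\|^2.
\]

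The next step is to bound every weight $a(\|x_i-x_j\|)$ from below by $a(\sqrt{2NX(t)})$ using that $a$ is nonincreasing. For this one needs the geometric estimate $\max_{i,j}\|x_i-x_j\|^2\leq 2NX(t)$, which is sharper than the naive $\leq 2N^2X(t)$ obtained by bounding one term of the sum by the whole. I would prove it by fixing the unit vector $e$ in the direction of the diameter $x_{i_0}-x_{j_0}$, reducing to the one-dimensional scalars $y_i=\langle x_i,e\rangle$ (for which $\max|y_i-y_j|=\|x_{i_0}-x_{j_0}\|$ and $\sum_i(y_i-\bar y)^2\leq NX(t)$), and then applying Cauchy--Schwarz to the centering identity $\sum_{k\neq i_0,j_0}(y_k-\bar y)=-(y_{i_0}-\bar y)-(y_{j_0}-\bar y)$, which after rearrangement gives $(y_{i_0}-y_{j_0})^2\leq 2\sum_i(y_i-\bar y)^2$. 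Plugging the uniform lower bound into the identity above and recognizing $\frac{1}{N^2}\sum_{i,j}\|v_i-v_j\|^2=2V(t)$ finishes the first claim.

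For the second assertion, a uniform bound $X(t)\leq\bar X$ gives $a(\sqrt{2NX(t)})\geq a(\sqrt{2N\bar X})=:c>0$ by positivity of $a$, so $\dot V(t)\leq -2cV(t)$ and Gr\"onwall's inequality yields $V(t)\leq V(0)e^{-2ct}\to 0$. The only step that is not a direct calculation is the geometric bound $\max_{i,j}\|x_i-x_j\|^2\leq 2NX$, which genuinely relies on the zero-mean centering $\sum_i(x_i-\bar x)=0$ and is the main obstacle; everything else is algebraic manipulation plus a one-line application of Gr\"onwall.
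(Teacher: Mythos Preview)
Your proposal is correct and follows essentially the same route as the paper: compute $\dot V$ exactly to get $\dot V=-\frac{1}{N^2}\sum_{i,j}a(\|x_i-x_j\|)\|v_i-v_j\|^2$, bound each $\|x_i-x_j\|$ by $\sqrt{2NX}$, use that $a$ is nonincreasing, and then apply Gr\"onwall for the second assertion. The only difference is in the geometric estimate, where the paper's argument is a one-liner that does not need your projection onto the diameter direction or the centering identity: since $x_i-x_j=x_{\perp_i}-x_{\perp_j}$, the triangle inequality and $(a+b)^2\le 2(a^2+b^2)$ give directly
\[
\|x_i-x_j\|\le \|x_{\perp_i}\|+\|x_{\perp_j}\|\le \sqrt{2}\Big(\sum_{k=1}^N\|x_{\perp_k}\|^2\Big)^{1/2}=\sqrt{2NX},
\]
so this step is not the ``main obstacle'' you describe, and in particular the zero-mean identity $\sum_k(y_k-\bar y)=0$ plays no role in the bound.
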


For multi-agent systems of the form~\eqref{system} sufficient conditions for consensus emergence are a particular case of the main result of~\cite{HaHaKim} and are summarized in the following proposition, whose proof is recalled in the Appendix, for self-containedness and reader's convenience.

\begin{proposition}\label{prop:condition-for-consensus}
Let $(x_{0},v_{0}) \in \RN \times \RN$ be  such that $X_{0}=B(x_{0},x_{0})$ and $V_{0} = B(v_{0},v_{0})$ satisfy
\begin{equation}\label{eq:CScondition}
\int_{\sqrt{X_{0}}}^{\infty} a (\sqrt{2N}r) dr \geq  \sqrt{V_{0}}\,.
\end{equation}
Then the solution of \eqref{system} with initial data $(x_{0},v_{0})$ tends to consensus.
\end{proposition}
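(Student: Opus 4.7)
The goal is to control the size of $X(t)$, since once $X$ is uniformly bounded, Lemma \ref{lem:app1} already delivers $V(t)\to 0$, which by Remark \ref{rk:flockequiv} is exactly consensus. The strategy is to exhibit a Lyapunov-like functional coupling $X$ and $V$ that is non-increasing along trajectories, and then to use the assumption \eqref{eq:CScondition} to cap its initial value.

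First I would derive two scalar differential inequalities. Differentiating $X(t)=B(x(t),x(t))$ gives $\dot X = 2B(x,v)$, and since $B(u,v)=B(u_\perp,v_\perp)$ and $B$ restricted to $\Vperp$ is (up to the factor $1/N$) the standard inner product, the Cauchy--Schwarz inequality yields $|B(x,v)|\leq \sqrt{B(x,x)B(v,v)}=\sqrt{XV}$. Hence $\bigl|\frac{d}{dt}\sqrt{X(t)}\bigr|\leq \sqrt{V(t)}$. On the other side, combining Lemma \ref{lem:app1} with the chain rule produces $\frac{d}{dt}\sqrt{V(t)} \leq -a\bigl(\sqrt{2NX(t)}\bigr)\sqrt{V(t)}$ (Remark \ref{rem:1} ensures $V(t)>0$ unless we start at consensus, so the chain rule is legitimate).

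Set $\phi(s):=\int_0^{s}a(\sqrt{2N}\,r)\,dr$ and consider
\[
\Psi(t) := \phi\bigl(\sqrt{X(t)}\bigr) + \sqrt{V(t)}.
\]
The two inequalities above combine into
\[
\dot\Psi(t)\leq a\bigl(\sqrt{2NX(t)}\bigr)\sqrt{V(t)} - a\bigl(\sqrt{2NX(t)}\bigr)\sqrt{V(t)} = 0,
\]
so $\Psi$ is non-increasing, and the hypothesis \eqref{eq:CScondition} rewrites exactly as $\Psi(0)=\phi(\sqrt{X_0})+\sqrt{V_0}\leq \phi(\infty)$, where $\phi(\infty):=\lim_{s\to\infty}\phi(s)\in(0,+\infty]$.

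To conclude, $V$ is also non-increasing by Lemma \ref{lem:app1}, so both $V(t)$ and $\Psi(t)$ admit limits, and therefore $\phi(\sqrt{X(t)})=\Psi(t)-\sqrt{V(t)}$ has a limit $\lambda\in[0,\phi(\infty)]$. If $\lambda<\phi(\infty)$, strict monotonicity of $\phi$ forces $\sqrt{X(t)}\to\phi^{-1}(\lambda)<\infty$, so $X(t)$ stays bounded and Lemma \ref{lem:app1} yields $V(t)\to 0$. The delicate case, and what I expect to be the main technical obstacle, is $\lambda=\phi(\infty)$: this can occur only when $\phi(\infty)<\infty$ and \eqref{eq:CScondition} holds with equality, and the bound $\Psi(t)\leq \Psi(0)\leq \phi(\infty)$ combined with $\phi(\sqrt{X(t)})\to\phi(\infty)$ then forces $\sqrt{V(t)}\to 0$ directly. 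Either way $V(t)\to 0$, i.e.\ the solution tends to consensus.
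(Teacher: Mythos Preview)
Your proof is correct and follows essentially the same route as the paper. The paper also derives the two scalar inequalities $\frac{d}{dt}\sqrt{X}\leq\sqrt{V}$ and $\frac{d}{dt}\sqrt{V}\leq -a(\sqrt{2NX})\sqrt{V}$ (its Lemmas in the Appendix), and from them the integral inequality $\sqrt{V(t)}+\int_{\sqrt{X_0}}^{\sqrt{X(t)}}a(\sqrt{2N}r)\,dr\leq\sqrt{V_0}$, which is exactly your statement $\Psi(t)\leq\Psi(0)$ rewritten; the only cosmetic difference is that the paper closes with a contradiction argument to bound $X(t)$ in the strict case, whereas you pass to limits of $\Psi$ and $V$ and read off the same conclusion.
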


The meaning of \eqref{eq:CScondition} is that as soon as $V_{0}$ and $X_0$ are sufficiently small then the system tends to consensus. In other words if the disagreement of the consensus parameters is sufficiently small and the initial main states are sufficiently close then the system tends to consensus.

\begin{definition}[Consensus Region]
We call the set of $(x,v) \in \RN\times \RN$ satisfying~\eqref{eq:CScondition}  the \emph{consensus region}.
\end{definition}

The consensus region represents an estimate on the basin of attraction of the consensus manifold. 
This estimate is, in some simple case, sharp as showed in Example~\ref{ex:nonflock} below.

Although consensus forms a rigidly translating stable pattern for the system and represents in some sense a ``convenient'' choice for the group, there are initial conditions for which the system does not tend to consensus, as the following example shows.

\begin{example}[Cucker--Smale system: two agents on the line]\label{ex:nonflock}
Consider the Cucker--Smale system~\eqref{system}-\eqref{eq:CSpotential} in the case of two agents moving on $\R$ with position and velocity at time $t$, $(x_{1}(t),v_{1}(t))$ and $(x_{2}(t),v_{2}(t))$. Assume for simplicity that $\beta =1, K=2$, and $\sigma=1$. Let $x(t) = x_{1}(t) - x_{2} (t)$ be the relative main state and $v(t) = v_{1} (t) - v_{2} (t)$ be the relative consensus parameter. Equation~\eqref{system}, then reads
$$
\left\{
\begin{split}
\dot x &= v\\
\dot v &= -\frac{v}{1+x^{2}}
\end{split}
\right.
$$
with initial conditions $x(0)=x_{0}$ and $v(0)=v_{0} > 0$. 
The solution of this system can be found by direct integration, {as from $\dot v = - \dot x / (1+x^{2})$ we have}
$$
v(t) -  v_{0} = -\arctan{x(t)} + \arctan{x_{0}}.
$$
If the initial conditions satisfy $|\arctan{x_{0}} + v_{0}|  < \pi/2$ then, as a consequence of Remark \ref{rem:1}, the relative main state $|x(t)|$ is bounded uniformly by $\tan{(|\arctan{x_{0}} + v_{0} |)}$, otherwise we would have $v(t^*) =0$ for a finite $t^*$. The boundedness of $x(t)$ fulfills the sufficient condition on the states in  Lemma \ref{lem:app1} for {consensus}.  If $ |\arctan{x_{0}} + v_{0}|  = \pi/2$  then the system tends to consensus as well, since $v(t) = \pm \pi/2 - \arctan{x(t)}$, depending on whether $\pm v_0 >0$ respectively: if $x(t)$ were unbounded then $\lim_{t \to \infty} x(t)= \pm \infty$, respectively, and necessarily we converged to consensus. If $x(t)$ were bounded then again by Lemma \ref{lem:app1} we would converge to consensus. \\
On the other hand, whenever $|\arctan{x_{0}} + v_{0}| > \pi/2 $,  which implies $|\arctan{x_{0}} + v_{0}| \geq  \pi/2 +\varepsilon$ for some $\varepsilon>0$, the  consensus parameter $v(t)$ remains bounded away from $0$ for every time, since
$$
|v(t)|= |-\arctan{x(t)} + \arctan{x_{0}} + v_{0}| \geq | -\arctan{x(t)} + \pi/2 +\varepsilon| > \varepsilon, 
$$
for every $t>0$.
In other words, the system does not tend to {consensus}.


\end{example}

Let us mention that by now there are several extensions of Cucker--Smale models of consensus towards addressing the presence of noise, collision-avoidance forces, non-symmetric communication, informed agents, and tolerance to faults. For a state of the art review on the current developments on such generalization we refer to \cite[Section 4.4.1]{viza12}. We mention in particular the recent work of Cucker and Dong \cite{CuckerDong11}, which modifies the original model by consider cohesion and avoidance forces. We shall return to this model in Section \ref{sec:ext} where we deal with extensions of our work.

\paragraph{Control of the Cucker--Smale model.}

When the consensus in a group of agents is not achieved by self-organization of the group, as in Example \ref{ex:nonflock} in case of $|\arctan{x_{0}} + v_{0}| > \pi/2 $, it is natural to ask whether it is possible to induce the group to reach it by means of an external action. In this sense we introduce the notion of \emph{organization via intervention}.  We consider the system \eqref{system} of $N$ interacting agents, in which the dynamics of every agent is additionally subject to the action of an external field. Admissible controls, accounting for the external field, are measurable functions $u=(u_{1},\ldots,u_{N}):[0,+\infty) \to \RN$ satisfying the $\ell_1^N-\ell_2^d$-norm constraint
\begin{equation}\label{const_cont}
\sum_{i=1}^{N}\|u_{i}(t)\| \leq M,
\end{equation}
for every $t >0$, for a given positive constant $M$. The time evolution of the state is governed by 
\begin{equation}\label{control}
\left\{
\begin{split}
\dot{x}_i(t)&=v_i(t) , \\
\dot{v}_i(t)&=\frac{1}{N}\sum_{j=1}^N a(\Vert x_j(t)-x_i(t)\Vert) (v_j(t)-v_i(t))+ u_{i}(t) ,
\end{split}
\right.
\end{equation}
for $i=1,\ldots,N$, and $x_i\in\R^d$, $v_i\in\R^d$.
In matrix notation, the above system can be written as
\begin{equation}
\left\{ \begin{split}
\dot x &= v \\
\dot v &= - L_{x} v + u,
\end{split}\right.
\end{equation} 
where $L_{x}$ is the Laplacian defined in Section \ref{sec:CSmodel}.
\\

Our aim is then to find admissible controls steering the system to the consensus region in finite time.  
In particular, we shall address the question of quantifying the {\it minimal amount of intervention} one external policy maker should use on the system in order to lead it to consensus,
and we formulate a practical strategy to approach optimal interventions.
Let us mention another conceptually similar approach
to our consensus control, i.e., the mean-field game, introduced by Lasry and Lions \cite{lali07}, and independently in the optimal control community under the name
Nash Certainty Equivalence (NCE) within the work \cite{HCM03}, later greatly popularized within consensus problems, 
for instance in \cite{NCM10,NCM11}. The first fundamental difference with our work is that in (mean-field) games, each individual agent is competing {\it freely} with
the others towards the optimization of its individual goal, as for instance in the financial market, and the emphasis is in the characterization of Nash equilibria.
Whereas in our model we are concerned with the optimization of the intervention of an external policy maker or coordinator endowed with rather limited resources to help the system to form a pattern, when self-organization
does not realize it autonomously, as it is a case, e.g., in modeling economical policies and government strategies.
Let us stress that in our model we are particularly interested to sparsify the control towards most effective results, and also that such an economical concept does not appear anywhere in the literature
when we deal with large particle systems.
\\

Our first approach towards sparse control will be a {\it greedy} one, in the sense that we will design a {\it feedback control} which will optimize instantaneously three fundamental quantities:
\begin{itemize}
\item[(i)] it has the minimal amount of components active at each time;
\item[(ii)] it has the minimal amount of switchings equispaced in time within the finite time interval to reach the consensus region; 
\item[(iii)] it maximizes at each switching time the rate of decay of the Lyapunov functional measuring the distance to the consensus region.
\end{itemize}
This approach models the situation where the external policy maker is actually not allowed to predict future developments and has to make optimal decisions based on instantaneous configurations.
Note that a componentwise sparse feedback control as in (i) is  more realistic and convenient in practice   than a control simultaneously active on more or even all agents,  because it requires acting only on at most one agent, at every instant of time.
The adaptive and instantaneous rule of choice of the controls is based on a variational criterion involving $\ell_1^N-\ell_2^d$-norm penalization terms.
Since however such {\it componentwise sparse controls} are likely to be {\it chattering} (see, for instance, Example~\ref{ex:symm}), i.e., requiring an infinite number of changes of the active control component over a bounded interval of time, we will also have to pay attention in deriving control strategies with property (ii), which are as well sparse in time, and we therefore call them \textit{time sparse controls}.
\\
Our second approach is based on a finite time optimal control problem where the minimization criterion is a combination of the distance from consensus and of the $\ell_1^N-\ell_2^d$-norm of the control. Such an optimization models the situation where the policy maker is actually allowed to make deterministic future predictions of the development. We show that componentwise sparse solutions are again likely to be the most favorable. 
\\

The rest of the paper is organized as follows: Section \ref{sec:stable} is devoted to establishing sparse feedback controls stabilizing System \eqref{control} to consensus. We investigate the construction of componentwise and time sparse controls.
In Section \ref{sec:optimality} we discuss in which sense the proposed sparse feedback controls have actually optimality properties and we address a general notion of complexity for consensus problems. 
In Section \ref{sec:controll} we  we combine the results of the previous sections with a local controllability result near the consensus manifold in order to prove global sparse controllability of Cucker--Smale consensus models. 
We study the sparsity features of solutions of a finite time optimal control of Cucker--Smale consensus models in Section \ref{sec:optimal} and we establish that the lacunarity of their sparsity is related to the codimension of certain manifolds.
The paper is concluded by an Appendix which collects some of the merely technical results of the paper.

\section{Sparse Feedback Control of the Cucker--Smale Model}\label{sec:stable}

\subsection{A first result of stabilization}
Note first that if the integral $\int_{0}^{\infty} a(r) dr$ diverges then every pair $(X, V)>0$ satisfies~\eqref{eq:CScondition}, in other words the interaction between the agents is so strong that the system will reach the consensus no matter what the initial conditions are. In this section we are interested in the case where  consensus does not arise autonomously therefore throughout this section we will assume that
$$
a\in L^{1}(0,+\infty).
$$
As already clarified in Lemma \ref{lem:app1} the quantity $V(t)$ is actually a Lyapunov functional for the uncontrolled System \eqref{system}. For the controlled System \eqref{control} such quantity actually becomes dependent on the choice
of the control, which can nevertheless be properly optimized. As a first relevant and instructive  observation we prove that an appropriate choice of the control law can always stabilize the system to consensus. 

\begin{proposition}\label{prop:stable}
For every $M>0$ and initial condition $(x_{0},v_{0}) \in \RN \times \RN$, the feedback control defined pointwise in time by $u(t) = - \alpha v_{\perp}(t)$, with $0< \alpha \leq \frac{M}{N\sqrt{B(v_{0},v_{0})}}$, satisfies the constraint \eqref{const_cont} for every $t\geq 0$ and stabilizes the system \eqref{control} to consensus in infinite time.
\end{proposition}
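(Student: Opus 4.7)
The plan is to verify two properties of the closed-loop system: (i) the admissibility constraint $\sum_i \|u_i(t)\| \leq M$ holds for all $t \geq 0$, and (ii) the Lyapunov functional $V(t) = B(v(t),v(t))$ decays exponentially, which by Remark~\ref{rk:flockequiv} is equivalent to consensus. Both follow from a single ODI for $V(t)$.

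First I would observe that the feedback $u(t) = -\alpha v_\perp(t)$ lies in $\mathcal{V}_\perp$, so $\sum_i u_i(t) = 0$ and the mean $\bar v$ remains an invariant of the controlled dynamics, exactly as in the uncontrolled case. This ensures that the decomposition $v = v_f + v_\perp$ has $v_f$ constant in time, and $v_\perp(t) = v(t) - v_f(0)$.

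Next, I would differentiate $V$ along trajectories of the closed-loop system $\dot v = -L_x v + u = -L_x v - \alpha v_\perp$. Using bilinearity of $B$ together with $B(v, v_\perp) = B(v_\perp, v_\perp) = V(t)$, one gets
\begin{equation*}
\frac{d}{dt}V(t) \;=\; 2B(v(t),\dot v(t)) \;=\; -2 B(v(t), L_{x(t)} v(t)) \;-\; 2\alpha\, V(t).
\end{equation*}
A short computation, using $\sum_i (L_x v)_i = 0$ and the symmetry of the weights $a(\|x_j - x_i\|)$, gives
\begin{equation*}
B(v, L_x v) \;=\; \tfrac{1}{2N^{2}} \sum_{i,j=1}^{N} a(\|x_i - x_j\|)\, \|v_i - v_j\|^{2} \;\geq\; 0,
\end{equation*}
so $\frac{d}{dt} V(t) \leq -2\alpha V(t)$ and hence $V(t) \leq V(0) e^{-2\alpha t}$ with $V(0) = B(v_0,v_0)$. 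This already yields $V(t) \to 0$, i.e.\ consensus in infinite time.

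Finally I would check the constraint. By Cauchy--Schwarz,
\begin{equation*}
\sum_{i=1}^{N} \|v_{\perp_{i}}(t)\| \;\leq\; \sqrt{N}\,\Bigl(\sum_{i=1}^{N}\|v_{\perp_{i}}(t)\|^{2}\Bigr)^{1/2} \;=\; N\sqrt{V(t)},
\end{equation*}
so that $\sum_i \|u_i(t)\| = \alpha \sum_i \|v_{\perp_{i}}(t)\| \leq \alpha N \sqrt{V(t)} \leq \alpha N \sqrt{V(0)} \leq M$ by the assumed bound on $\alpha$. There is no serious obstacle here: the proof is essentially the observation that the feedback $-\alpha v_\perp$ is tailored so as to contribute exactly a term $-2\alpha V$ to $\dot V$, after which monotonicity of $V$ makes the a priori control bound, initially only valid at $t=0$, persist for all later times.
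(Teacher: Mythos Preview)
Your proof is correct and follows essentially the same approach as the paper: differentiate $V$, use the nonnegativity of $B(v,L_xv)$ to obtain $\dot V \le -2\alpha V$, deduce exponential decay, and then use Cauchy--Schwarz together with $V(t)\le V(0)$ to verify the constraint. Your extra remarks (invariance of $\bar v$ under this feedback, the explicit formula for $B(v,L_xv)$) are correct elaborations but not additional ideas beyond the paper's argument.
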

\begin{proof}
Consider the solution of \eqref{control} with initial data $(x_{0}, v_{0})$ associated with the feedback control $u = - \alpha v_{\perp}$, with $0 < \alpha \leq \frac{M}{N\sqrt{B(v_{0},v_{0})}}$.
Then, by non-negativity of $L_x$, 
\begin{align*}
\frac{d}{dt} V(t) &= \frac{d}{dt} B(v(t),v(t))\\
&= -2 B(L_{x}v(t), v(t)) + 2 B(u(t),v(t))\\
&\leq 2 B(u(t),v(t)) \\
& = -2 \alpha B(v_{\perp}(t), v_{\perp}(t) )\\
& = -2 \alpha V(t).
\end{align*}
Therefore  $V(t) \leq e^{- 2\alpha t}V(0)$ and $V(t)$ tends to $0$  exponentially fast as $t \to \infty$. 
Moreover
\begin{align*}
\sum_{i=1}^{N} \|u_{i}(t)\| & \leq \sqrt{N} \sqrt{\sum_{i=1}^N \|u_{i}(t)\|^{2}}
= \alpha \sqrt{N}  \sqrt{\sum_{i=1}^N \|v_{\perp_{i}}(t)\|^{2}}
= \alpha N\sqrt{V(t)}
\leq \alpha N \sqrt{V(0)} = M,
\end{align*}
and thus the control is admissible.
\end{proof}

In other words the system \eqref{const_cont}-\eqref{control} is semi-globally feedback stabilizable.
Nevertheless this result has  a merely theoretical value: the feedback stabilizer $u = - \alpha v_{\perp}$ is not convenient for practical purposes since it requires to act at every instant of time on all the agents in order to steer the system to consensus, which may  require a large amount of instantaneous communications. In what follows we address the design of more economical and practical feedback controls which can be both componentwise and time sparse.

\subsection{Componentwise sparse feedback stabilization}
We introduce here a variational principle leading to a componentwise sparse stabilizing feedback law. 

\begin{definition}\label{def:variational}
For every $M>0$ and every $(x,v) \in \RN\times\RN$, let $U(x,v)$ be defined as the set of solutions of the variational problem
\begin{equation}\label{eq:variational}
\min \left( B (v, u) + \gamma(B(x,x)) \frac{1}{N} \sum_{i=1}^{N}\|u_{i}\| \right)  \quad \mbox{ subject to } \sum_{i=1}^{N}\|u_{i}\| \leq M\,,
\end{equation}
where 
\begin{equation}\label{defgamma}
\gamma(X) = \int_{\sqrt{X}}^{\infty} a (\sqrt{2N}r) dr.
\end{equation}
\end{definition}

The meaning of~\eqref{eq:variational} is the following. 
Minimizing the component $B(v,u)=B( v_{\perp}, u)$ means that, at every instant of time, the control $u \in U(x,v)$ is of the form $u = - \alpha \cdot v_\perp$, for some $\alpha = (\alpha_1, \dots, \alpha_N)$ sequence of nonnegative scalars.
Hence it acts as an additional force which pulls the particles towards having the same mean consensus parameter, as highlighted by the proof of Proposition \ref{prop:stable}. Imposing additional $\ell_1^N-\ell_2^d$-norm constraints has the function of enforcing {\it sparsity}, i.e., most of the $\alpha_i's$ will turn out to be zero, as we will in more detail clarify below. Eventually, the threshold $\gamma(X)$ is chosen in such a way that  when the control switches-off the
criterion \eqref{eq:CScondition} is fulfilled.
Let us stress that the choice of the $\ell_1^N$-norm minimization has the relevant advantage with respect
to other  potentially sparsifying constraints, such that, e.g., $\sqrt{\sum_{i=1}^N \|u_i\|^2}$, to reduce the variational principle \eqref{eq:variational} to a very simple separable scalar optimization.

The componentwise sparsity feature of feedback controls $u(x,v) \in U(x,v)$ is analyzed in the next remark, where we make explicit the set $U(x,v)$ according to the value of $(x,v)$ in a partition of the space $\RN\times\RN$.

\begin{remark}\label{rk:Uxv}
First of all, it is easy to see that, for every $(x,v)\in\RN\times\RN$ and every element $u(x,v)\in U(x,v)$ there exist nonnegative real numbers $\alpha_i$'s such that
\begin{equation}\label{eq:distr}
u_{i}(x,v) = 
\left\{
\begin{split}
&0 \quad &\mbox{ if } v_{\perp_{i}} =0,\\
 &- \alpha_{i} \frac{v_{\perp_{i}}}{\|v_{\perp_{i}}\|} &  \quad \mbox{ if } v_{\perp_{i}} \neq 0,
\end{split}
\right.
\end{equation}
where $0\leq \sum_{i=1}^{N}\alpha_{i} \leq M$.
\\
The componentwise sparsity of $u$ depends on the possible values that the $\alpha_i$'s may take in function of $(x,v)$. Actually, the space $\RN\times\RN$ can be partitioned in the union of the four disjoint subsets $\mathcal{C}_{1}, \mathcal{C}_{2}, \mathcal{C}_{3}$, and $\mathcal{C}_{4}$ defined  by
\begin{itemize}
\item[$\mathcal{C}_{1} =$] $\{(x,v)\in \RN\times\RN \ \vert\ {\max_{1\leq i \leq N} \|v_{\perp_{i}}\| < \gamma(B(x,x)) } \}$,
\item[$\mathcal{C}_{2} =$] $\{(x,v)\in \RN\times\RN \ \vert\ {\max_{1\leq i \leq N} \|v_{\perp_{i}}\| = \gamma(B(x,x)) }\}$,
\item[$\mathcal{C}_{3} =$] $\{(x,v)\in \RN\times\RN \ \vert\  {\max_{1\leq i \leq N} \|v_{\perp_{i}}\| > \gamma(B(x,x)) }$ and there exists a unique $i \in \{1,\ldots,N\}$ such that $\|v_{\perp_{i}}\| > \|v_{\perp_{j}}\|$ for every $j \neq i\}$,
\item[$\mathcal{C}_{4} =$] $\{(x,v)\in \RN\times\RN \ \vert\ {\max_{1\leq i \leq N} \|v_{\perp_{i}}\| > \gamma(B(x,x)) }$ and there exist $k \geq 2$ and $i_{1},\ldots, i_{k} \in \{1,\ldots,N\}$ such that 
$\|v_{\perp_{i_{1}}}\| = \cdots = \|v_{\perp_{i_{k}}}\|$ and
$\|v_{\perp_{i_{1}}}\| > \|v_{\perp_{j}}\|$ for every $j \notin \{i_{1},\ldots, i_{k}\}\}$.
\end{itemize}
The subsets  $\mathcal{C}_{1}$ and $\mathcal{C}_{3}$ are open, and the complement $(\mathcal{C}_{1} \cup \mathcal{C}_{3})^c$ has Lebesgue measure zero. Moreover for every $(x,v) \in \mathcal{C}_{1}\cup \mathcal{C}_{3} $, the set $U(x,v)$ is single valued and its value is a sparse vector with at most one nonzero component. More precisely, one has $U|_{\mathcal{C}_{1}} = \{0\}$ and 
$U|_{\mathcal{C}_{3}} = \{(0,\ldots,0, -Mv_{\perp_{i}}/\|v_{\perp_{i}}\|,0,\ldots,0)\}$ for some unique $i\in\{1,\ldots,N\}$.
\\
If $(x,v) \in \mathcal{C}_{2}\cup \mathcal{C}_{4}$ then a control in $U(x,v)$ may not be sparse: indeed in these cases the set $U(x,v)$ consists of all $u=(u_{1},\ldots,u_{N}) \in \RN$ such that $u_{i}=-\alpha_{i}v_{\perp_{i}}/\|v_{\perp_{i}}\|$ for every $i=1,\dots,N$, where the $\alpha_i$'s are nonnegative real numbers such that $0\leq \sum_{i=1}^{N}\alpha_{i} \leq M$ whenever $(x,v) \in \mathcal{C}_{2}$, and $\sum_{i=1}^{N}\alpha_{i} = M$ whenever $(x,v)\in \mathcal{C}_{4}$.
\end{remark}

By showing that the choice of feedback controls as in Definition \ref{def:variational}  optimizes the Lyapunov functional $V(t)$, we can again prove convergence to consensus.

\begin{theorem}\label{thm:main}
For every $(x,v)\in\RN\times\RN$, and $M>0$, set $F(x,v) = \{(v, -L_{x}v + u)\ \vert\  u\in U(x,v)\}$, where $U(x,v)$ is as in Definition \ref{def:variational}.
Then for every initial pair $(x_{0},v_{0}) \in \RN \times \RN$, the differential inclusion 
\begin{equation}\label{eq:inclusion}
(\dot x,\dot v) \in F(x,v)
\end{equation}
with initial condition $(x(0),v(0)) = (x_{0},v_{0})$ is well-posed and its solutions converge to consensus as $t$ tends to $+\infty$.
\end{theorem}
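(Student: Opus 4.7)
My plan is to split the proof into three ingredients: well-posedness of the differential inclusion, a Lyapunov inequality coupled with an invariance property of the consensus region of Proposition~\ref{prop:condition-for-consensus}, and a finite-time capture argument that drives every trajectory inside it. The delicate point throughout will be the upper semicontinuity of $F$ at the transition sets $\mathcal{C}_{2}, \mathcal{C}_{4}$: defining $U(x,v)$ as the full \emph{set} of minimizers (rather than any particular selection) is precisely what convexifies the jumps of the single-valued branches on $\mathcal{C}_{1}\cup\mathcal{C}_{3}$ and renders the inclusion well-posed.

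\emph{Well-posedness.} The set $U(x,v)$ is the minimizer set of a continuous convex functional on the compact convex feasible set $\{u\in\RN:\sum_{i}\|u_{i}\|\leq M\}$, hence nonempty, convex and compact. Going through the four cells of the partition of Remark~\ref{rk:Uxv}, the enlarged set-values of $U$ on $\mathcal{C}_{2}$ and $\mathcal{C}_{4}$ are precisely the limits of the single-valued branches on $\mathcal{C}_{1}$ and $\mathcal{C}_{3}$, so $F$ is upper semicontinuous with nonempty compact convex values, and standard existence results for differential inclusions in the spirit of Filippov yield an absolutely continuous solution starting from every initial pair $(x_{0},v_{0})$.

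\emph{Lyapunov inequality and invariance.} Along any such solution, testing the variational principle defining $U(x,v)$ with $u=0$ as competitor gives $B(v,u)+\gamma(X)\tfrac{1}{N}\sum_{i}\|u_{i}\|\leq 0$, hence $B(v,u)\leq 0$. Combined with the inequality $-2B(L_{x}v,v)\leq -2a(\sqrt{2NX})V$ (used in the proof of Lemma~\ref{lem:app1}), this yields
\[
\tfrac{d}{dt}\sqrt{V(t)}\leq -a(\sqrt{2NX(t)})\sqrt{V(t)},
\]
while Cauchy--Schwarz applied to $\dot X=2B(x_{\perp},v_{\perp})$ gives $|\tfrac{d}{dt}\sqrt{X(t)}|\leq\sqrt{V(t)}$. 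Differentiating $h(t):=\gamma(X(t))-\sqrt{V(t)}$ then yields $h'(t)\geq 0$, so the consensus region $\{\sqrt V\leq\gamma(X)\}$ is forward-invariant under the closed-loop dynamics.

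\emph{Finite-time entrance and convergence.} If $(x_{0},v_{0})$ is outside the consensus region, then as long as $\sqrt{V(t)}>\gamma(X(t))$ one has $\max_{i}\|v_{\perp_{i}}(t)\|\geq\sqrt{V(t)}>\gamma(X(t))$, so the state lies in $\mathcal{C}_{3}\cup\mathcal{C}_{4}$. By Remark~\ref{rk:Uxv} the budget saturates and $B(v,u)=-\tfrac{M}{N}\max_{i}\|v_{\perp_{i}}\|\leq -\tfrac{M}{N}\sqrt V$, whence $\tfrac{d}{dt}\sqrt V\leq -M/N$. Meanwhile $\sqrt{X(t)}\leq\sqrt{X_{0}}+\int_{0}^{t}\sqrt V\,ds\leq\sqrt{X_{0}}+\tfrac{N V_{0}}{2M}$ stays bounded on this transient, so $\gamma(X(t))\geq c>0$ uniformly; the linear decay of $\sqrt V$ must therefore cross $\gamma(X(t))$ in finite time, entering the invariant consensus region. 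From then on the inequalities above coincide with those driving the proof of Proposition~\ref{prop:condition-for-consensus}, so the same bootstrap argument delivers $X(t)$ uniformly bounded and $V(t)\to 0$ as $t\to+\infty$.
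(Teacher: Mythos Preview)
Your proof is correct and follows the same three-part architecture as the paper's: well-posedness of the inclusion via upper semicontinuity and convexity of $U(x,v)$, a Lyapunov capture argument forcing finite-time entry into the consensus region, and invocation of the estimates behind Proposition~\ref{prop:condition-for-consensus} thereafter. The one substantive difference lies in how you bridge the transient and asymptotic phases. The paper bounds $X(t)$ uniformly by an explicit constant $\bar X$ during the transient (via Lemma~\ref{lem:integrate}), uses $\gamma(\bar X)$ as a fixed target for $\sqrt{V}$, and then after entrance relies on the forward-invariance of $\mathcal{C}_{1}$ (Lemma~\ref{lem:invariance}) together with Lemma~\ref{lem:app1}. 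You instead prove directly that $h(t)=\gamma(X(t))-\sqrt{V(t)}$ is nondecreasing along \emph{every} closed-loop trajectory, using the competitor $u=0$ in \eqref{eq:variational} to obtain $B(v,u)\leq 0$ unconditionally. This is a slightly cleaner packaging: it makes invariance of the consensus region transparent even while the control is still active, and sidesteps the question of whether $X(t)\leq\bar X$ persists after the control changes regime. The price is that your entrance-time and $X$-bound constants are less explicit than the paper's $T_{0}$ and $\bar X$. One minor omission: you do not mention the linear growth $\|F(x,v)\|\leq c_{1}\|v\|+c_{2}$ needed to promote local solutions of the inclusion to global ones, which the paper does check; this is routine but worth stating.
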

\begin{remark}\label{remm4}
By definition of the feedback controls $u(x,v)\in U(x,v)$, and from Remark \ref{rk:Uxv}, it follows that, along a closed-loop trajectory, as soon as $V(t)$ is small enough with respect to $\gamma(B(x,x))$ the trajectory has entered the consensus region defined by \eqref{eq:CScondition}. From this point in time no action is further needed to stabilize the system, since Proposition~\ref{prop:condition-for-consensus} ensures then that the system is naturally stable to consensus. 
 Notice that ${\mathcal{C}_{1}}$ is strictly contained in the consensus region and, moreover, every trajectory of the uncontrolled system~\eqref{system} originating in $\mathcal{C}_{1}$ remains in $\mathcal{C}_{1}$ (see Lemma~\ref{lem:invariance} in the Appendix).
Hence when the system enters the region $\mathcal{C}_{1}$,  in which there is no longer need to control,  the control switches-off automatically end it is set to $0$ forever. 
It follows from the proof of Theorem \ref{thm:main} below
that the time $T$ needed to steer the system to the consensus region is not larger than
$\frac{{N}}{M}\left(\sqrt{V(0)} -\gamma(\bar X)\right)$, where $\gamma$ is defined by \eqref{defgamma}, and $\bar X= 2 X(0)  + \frac{{N^{4}}}{2 M^{2}} V(0)^2$.
\end{remark}

\begin{proof}[Proof of Theorem \ref{thm:main}]
 First of all we prove that \eqref{eq:inclusion} is well-posed, by using general existence results of the theory of differential inclusions (see e.g. \cite[Theorem~2.1.3]{AubinCellina}). 
For that we address the following steps:
\begin{itemize}
\item being the  set $F(x,v)$ non-empty, closed, and convex for every $(x,v) \in \RN\times \RN$ (see Remark~\ref{rk:Uxv}), we show that $F(x,v)$ is upper semi-continuous; this will imply local existence of solutions of  \eqref{eq:inclusion};
\item we will then argue the global extension of these solutions for every $t \geq 0$ by the classical theory of ODE's, as it is  sufficient to remark that there exist positive constants $c_{1},c_{2}$ such that $\|F(x,v)\| \leq c_{1} \|v\| + c_{2}$.
\end{itemize}
Let us address the upper semi-continuity of $F(x,v)$, that is 
for every $(x_{0},v_{0})$ and for every $\varepsilon>0$ there exists $\delta >0$ such that
$$
F(B_{\delta}(x_{0},v_{0})) \subset B_{\varepsilon} (F(x_{0},v_{0})),
$$
where $B_{\delta}(y), B_{\varepsilon}(y) $ are the balls of $ \RN\times \RN$ centered in $y$ with radius $\delta$ and $\varepsilon$ respectively.
As the composition of upper semi-continuous functions is upper semi-continuous (see~\cite[Proposition~1.1.1]{AubinCellina}), then it is sufficient to prove that $U(x,v)$ is upper semi continuous. For every $(x,v) \in \mathcal{C}_{1}\cup \mathcal{C}_{3}$, $U(x,v)$ is single valued and continuous. If $(x,v)\in \mathcal{C}_{2}$ then
there exist $i_{1},\ldots,i_{k}$ such that $ \|v_{\perp_{i_{1}}}\| = \cdots = \|v_{\perp_{i_{k}}}\|$ and $\|v_{\perp_{i_{1}}}\| > \|v_{\perp_{l}}\|$ for every $l \notin  \{i_{1},\ldots,i_{k}\}$. If $\delta < \min_{l \notin  \{i_{1},\ldots,i_{k}\}}\left( \|v_{\perp_{i_{1}}}\| - \|v_{\perp_{l}}\| \right)$  then
$U(B_{\delta}(x,v)) =  U(x,v)$ hence, in particular, it is upper semi continuous. With a similar argument it is possible to prove that $U(x,v)$ is upper semi continuous for every $(x,v)\in \mathcal{C}_{4}$.
This establishes the well-posedness of \eqref{eq:inclusion}.

Now, let $(x(\cdot),v(\cdot))$ be a solution of~\eqref{eq:inclusion}. 
Let $T$ the minimal time needed to reach the consensus, that is $T$ is the smallest number such that $V(T) = \gamma(X(T))^2$, with the convention that $T=+\infty$ if the system does not reach consensus. 
For almost every $t \in (0,T)$ then we have $V(t)> \gamma(X(t))^2$.
Thus the trajectory $(x(\cdot),v(\cdot))$ is in $\mathcal{C}_{3}$ or $\mathcal{C}_{4}$ and
there exist indices $i_{1},\ldots, i_{k}$ in $\{1,\ldots,N\}$ such that 
$\|v_{\perp_{i_{1}}}(t)\| = \cdots = \|v_{\perp_{i_{k}}}(t)\|$ and
$\|v_{\perp_{i_{1}}}(t)\| > \|v_{\perp_{j}}(t)\|$ for every $j \notin \{i_{1},\ldots, i_{k}\}$. 
Hence if $u(t) \in U(x(t),v(t))$ then
$$
u_{j}(t) = 
\begin{cases}
-\alpha_{j} \displaystyle\frac{v_{\perp_{j}}(t) }{\|v_{\perp_{j}}(t)\| } \quad  &\mbox{ if } j\in\{i_{1},\ldots,i_{k}\}, \\
0 & \mbox{ otherwise,}
\end{cases}
$$
where $\sum_{j=1}^{k} \alpha_{i_{j}}=M$.
Then,
\begin{align}
\frac{d}{dt} V(t) &= \frac{d}{dt} B(v(t),v(t))\nonumber\\
& \leq 2 B(u(t),v(t))\nonumber\\
& = \frac{2}{{N}} \sum_{i=1}^{N}  \langle u_{i}(t),v_{\perp_{i}}(t) \rangle\nonumber\\
& = -  \frac{2}{{N}}  \sum_{j=1}^{k} \alpha_{i_{j}} \|v_{\perp_{i_{j}}}(t)\| \nonumber\\
& = -2  \frac{M}{{N}}  \|v_{\perp_{i_{1}}}(t)\| \nonumber\\
&\leq -2 \frac{M}{{N}} \sqrt{V(t)}.
\end{align}
For clarity, notice that in the last inequality we used the maximality of $\|v_{\perp_{i_{1}}}(t)\|$ for which 
$$
\frac{N}{N^2}\|v_{\perp_{i_{1}}}(t)\|^2 \geq \frac{1}{N^2} \sum_{j=1}^N \|v_{\perp_{j}}(t)\|^2, 
$$
or
$$
\frac{\sqrt N}{N}\|v_{\perp_{i_{1}}}(t)\| \geq \frac{1}{\sqrt N} \left ( \frac{1}{N} \sum_{j=1}^N \|v_{\perp_{j}}(t)\|^2 \right )^{1/2},
$$
and eventually
$$
- \frac{1}{N}\|v_{\perp_{i_{1}}}(t)\| \leq -\frac{1}{N} \sqrt{V(t)}.
$$
Let $V_{0} = V(0)$ and $X_{0}=X(0)$. It follows from Lemma~\ref{lem:integrate} in Appendix, or simply by direct integration, that
\begin{equation}\label{eq:kappa}
V(t) \leq \left( \sqrt{V_{0}} - \frac{M}{{N}}  t \right)^{2},
\end{equation}
and 
$$
X(t) \leq 2 X_{0}  + \frac{{N^{4}}}{2 M^{2}} V_{0}^{2} = \bar X.
$$ 
Note that the time needed to steer the system in the consensus region is not larger than
\begin{equation} \label{eq:bound}
T_0= \frac{{N}}{M}\left(\sqrt{V_{0}} -\gamma(\bar X)\right),
\end{equation}
and in particular it is finite. Indeed, for almost every $t > T_{0}$ we have
$$
\sqrt{V(t)} < \sqrt{V(T_{0})} \leq  \sqrt{V_{0}} - \frac{M}{N} T_{0} = 
\gamma(\bar X) \leq \gamma(X(t)),
$$
and 
Proposition \ref{prop:condition-for-consensus}, in particular~\eqref{eq:CScondition}, implies that 
the system is in the consensus region.
Finally, for $t$ large enough $\max_{1\leq i \leq N} \|v_{\perp_{i}}\| < \gamma(X(t))$,
then by Lemma~\ref{lem:app1} we infer that $V(t)$ tends to $0$. 
\end{proof}

Within the set $U(x,v)$ as in Definition \ref{def:variational}, which in general does not contain only sparse solutions, 
there are actually selections with maximal sparsity.

\begin{definition}\label{def:u}
We select the \emph{componentwise sparse feedback control} $u^\circ = u^\circ(x,v)\in U(x,v)$ according to the following criterion:
 \begin{itemize}
\item if ${\max_{1\leq i\leq N} \|v_{\perp_{i}}\| \leq  \gamma(B(x,x))^{2}}$, then $u^\circ=0$,
\item if 
$\max_{1\leq i\leq N} \|v_{\perp_{i}}\| >  \gamma(B(x,x))^{2}$ let $j \in \{1,\ldots,N\}$ be the smallest index such that 
$$
\|v_{\perp_{j}}\| =  \max_{1\leq i\leq N} \|v_{\perp_{i}}\|
$$
then 
$$
 u^\circ_{j} = - M \frac{v_{\perp_{j}}}{\|v_{\perp_{j}}\|},\quad\mbox{ and }\quad u^\circ_{i} = 0 \quad \mbox{for every } i\neq j.
$$
\end{itemize}
\end{definition}

The control $u^{\circ}$ can be, in general, highly irregular in time. If we consider for instance a system in which there are two agents with maximal disagreement then the control $u^{\circ}$ switches at every instant from one agent to the other and it is everywhere discontinuous. The natural definition of solution associated with the feedback control $u^{\circ}$ is therefore the notion of sampling solution 
as introduced in~\cite{CLSS}. 

\begin{definition}[Sampling solution]\label{def:samplingsolution}
Let $U \subset \R^{m}$,  $f:\R^{n}\times U \to \R^{n}$ be continuous and locally Lipschitz in $x$ uniformly on compact subset of $\R^{n}\times U$. 
Given a feedback $u:\R^{n}\to U$,  $\tau>0$, and $x_{0} \in \R^{n}$ 
we define the \emph{sampling solution}  
of the differential system 
$$
\dot x = f(x,u(x)),\quad x(0)=x_{0},
$$
as the continuous (actually piecewise $C^1$) function $x:[0,T] \to \mathbb R^n$ solving recursively for $k \geq 0$
$$ 
\dot x (t) = f(x(t), u(x(k\tau))), \quad t\in [k\tau,(k+1)\tau]
$$ 
using as initial value $x(k\tau)$, the endpoint of the solution on the preceding interval, and starting with $x(0)=x_{0}$. We call $\tau$ the \emph{sampling time}. 
\end{definition}

This notion of solution is of particular interest for applications in which a minimal interval of time between two switchings of the control law is demanded.
As the sampling time becomes smaller and smaller the sampling solution of~\eqref{control} associated with our componentwise sparse control $u^\circ$ as defined in Definition~\ref{def:u} 
approaches uniformly a Filippov solution of~\eqref{eq:inclusion}, i.e. an absolutely continuous function satisfying~\eqref{eq:inclusion} for almost every $t$.
In particular we shall prove in Section~\ref{sec:proofthm2} the following  statement.

\begin{theorem}\label{thm:main2}
Let $u^\circ$ be the componentwise sparse control defined in Definition~\ref{def:u}. For every $M>0, \tau>0$, and $(x_{0},v_{0})\in \RN\times\RN$ let $(x_{\tau}(t),v_{\tau}(t))$ be the sampling solution of~\eqref{control} associated with $u^{\circ}$. 
Every closure point of the sequence of trajectories $\left((x_{\tau}(t),v_{\tau}(t))\right)_{\tau>0}$
is a Filippov solution of~\eqref{eq:inclusion}.
\end{theorem}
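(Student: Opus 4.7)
The plan is to invoke the classical convergence theorem for differential inclusions with upper semicontinuous, closed, convex-valued right-hand side (in the spirit of Aubin--Cellina \cite{AubinCellina}), leveraging the upper semicontinuity of $F$ already established in the proof of Theorem~\ref{thm:main}. Writing $z_\tau=(x_\tau,v_\tau)$ for brevity, I would first secure uniform compactness of the family $\{z_\tau\}_{\tau>0}$ on an arbitrary compact interval $[0,T]$, then extract a uniform limit via Arzel\`a--Ascoli, and finally identify that limit as a Filippov solution of~\eqref{eq:inclusion}.

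For the compactness step, note that by construction $\sum_i\|u^\circ_i(\cdot)\|\leq M$ and that $a\leq a(0)$ gives a uniform bound $\|L_x\|\leq C_N$ on the operator norm of the Laplacian (depending only on $N$ and $a(0)$, not on $x$). Consequently the sampling equations force $\|\dot v_\tau(t)\|\leq C_N\|v_\tau(t)\|+M$ pointwise, and Gronwall yields uniform bounds on $\|v_\tau\|_{L^\infty([0,T])}$ and hence on $\|\dot x_\tau\|_{L^\infty}$. Thus $z_\tau$ is uniformly bounded and uniformly Lipschitz on $[0,T]$, independently of $\tau$. Arzel\`a--Ascoli then produces, along any sequence $\tau_n\to 0$, a subsequence along which $z_{\tau_n}\to z=(x,v)$ uniformly; the limit is Lipschitz, hence absolutely continuous. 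The derivatives $\dot z_{\tau_n}$ are uniformly bounded in $L^\infty$, so after a further extraction they converge weakly-$*$ in $L^\infty([0,T])$ to $\dot z$.

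The core step is verifying $\dot z(t)\in F(z(t))$ for almost every $t$. For $t\in[k_n\tau_n,(k_n+1)\tau_n)$ one has
$$
\dot z_{\tau_n}(t) = \bigl(v_{\tau_n}(t),\,-L_{x_{\tau_n}(t)}v_{\tau_n}(t)+u^\circ(z_{\tau_n}(k_n\tau_n))\bigr),
$$
with $u^\circ(z_{\tau_n}(k_n\tau_n))\in U(z_{\tau_n}(k_n\tau_n))$ and, by the uniform Lipschitz estimate combined with uniform convergence, $\|z_{\tau_n}(k_n\tau_n)-z(t)\|\to 0$ uniformly in $t$. The proof of Theorem~\ref{thm:main} already established that $U$, and therefore $F$, is upper semicontinuous with nonempty closed convex values, and the map $(x,v,u)\mapsto(v,-L_xv+u)$ is continuous. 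It follows that for every $\varepsilon>0$ and all sufficiently large $n$, $\dot z_{\tau_n}(t)\in F(z(t))+B_\varepsilon(0)$ for almost every $t$. Applying Mazur's lemma to pass from weak-$*$ to a.e.\ strong convergence along convex combinations, and exploiting the closedness and convexity of $F(z(t))$, yields $\dot z(t)\in F(z(t))$ almost everywhere, whence $z$ is a Filippov solution of~\eqref{eq:inclusion}.

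The main obstacle is that $u^\circ$ is a highly discontinuous selection from $U$, evaluated at the \emph{sampled} point $z_{\tau_n}(k_n\tau_n)$ rather than the current state $z_{\tau_n}(t)$ appearing in the dynamics; one cannot take pointwise limits of $u^\circ(z_{\tau_n}(k_n\tau_n))$ since its convergence is not guaranteed, and in the regions $\mathcal{C}_2,\mathcal{C}_4$ the set $U$ is genuinely multivalued. This is precisely where upper semicontinuity and convexity of $F$ (inherited from $U$), together with the $O(\tau)$ bound on the sampling lag, must jointly compensate for the bad pointwise behaviour of $u^\circ$.
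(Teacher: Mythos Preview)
Your proposal is correct and follows essentially the same route as the paper: uniform a priori bounds via the linear growth of the drift and the control constraint, Arzel\`a--Ascoli for uniform convergence, and then a weak-compactness plus upper-semicontinuity closure argument to identify the limit as a Filippov solution. The only technical difference is in the closure step: the paper extracts weak $L^1$ limits of the controls $u_n$ via Dunford--Pettis and then invokes the convergence theorem \cite[Theorem~1, p.~60]{AubinCellina} for upper hemicontinuous maps, whereas you work directly with weak-$*$ limits of $\dot z_{\tau_n}$ and argue by Mazur's lemma and convexity of $F(z(t))$; these are equivalent standard devices.
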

Let us stress that, as a byproduct of our analysis, we shall eventually construct practical feedback controls which are both {\it componentwise} and {\it time sparse}.

\subsection{Time sparse feedback stabilization}

Theorem~\ref{thm:main} gives the existence of a feedback control whose behavior may be, in principle, very complicated and that may be nonsparse.  In this section we are going to exploit the variational principle~\eqref{eq:variational} to give an explicit construction of a piecewise constant and componentwise sparse control steering the system to consensus. The idea is to take a selection of a feedback in $U(x,v)$ which has {\it at most one nonzero component} for every $(x,v) \in \RN\times\RN$, as in Definition \ref{def:u}, and then sample it to avoid \emph{chattering} phenomena (see, e.g., \cite{borisov}).

\begin{theorem}\label{prop:piecewise}
Fix $M>0$ and consider the control $u^\circ$ law given by Definition~\ref{def:u}. Then for every initial condition $(x_{0},v_{0}) \in \RN \times \RN$  there exists $\tau_0> 0$ small enough,
such that for all $\tau \in (0, \tau_0]$ the sampling solution of~\eqref{control} associated with the control $u^\circ$, the sampling time $\tau$, and initial pair $(x_{0},v_{0})$
reaches the consensus region in finite time.
\end{theorem}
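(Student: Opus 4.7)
The approach is to mimic, at the level of discrete samples, the differential inequality $\frac{d}{dt}V(t) \leq -\frac{2M}{N}\sqrt{V(t)}$ used in the proof of Theorem \ref{thm:main}. Concretely, I will show that the Lyapunov functional along the sampling trajectory satisfies a recursion of the form
\[
V_{k+1} \leq V_k - \frac{2M\tau}{N}\sqrt{V_k} + C\tau^2,
\]
with $V_k := V(k\tau)$ and $C$ depending only on $V_0, M, N, a(0)$, and that a uniform bound $X(t) \leq \bar X$ (analogous to the one in Remark \ref{remm4}) holds along the sampling solution for all $\tau$ small enough. These two ingredients together will force the discrete trajectory below the consensus threshold after $O(1/\tau)$ sampling steps, i.e.\ in uniformly bounded time.

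The recursion is derived as follows. On the sampling interval $[k\tau, (k+1)\tau]$ the feedback $u^\circ(x_{\tau}(k\tau), v_{\tau}(k\tau))$ is constant: it is either zero (if the sample at $k\tau$ lies in $\mathcal{C}_1$, in which case we are already in the consensus region) or is supported on a single maximal index $j = j_k$ with value $-M v_{\perp_j}(k\tau)/\|v_{\perp_j}(k\tau)\|$, and by maximality $\|v_{\perp_j}(k\tau)\| \geq \sqrt{V(k\tau)}$. Writing $v_{\perp_j}(t) = v_{\perp_j}(k\tau) + (v_{\perp_j}(t) - v_{\perp_j}(k\tau))$ and using the nonnegativity of $L_{x}$,
\[
\frac{d}{dt}V(t) \leq 2 B(u, v(t)) \leq -\frac{2M}{N}\|v_{\perp_j}(k\tau)\| + \frac{2M C_1}{N}\tau,
\]
where $C_1$ is an upper bound for $\|\dot v_\perp\|$ along the trajectory, obtained from $\|\dot v\| \leq a(0)\|v_\perp\| + M$. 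Integration produces the displayed recursion, and a parallel Cauchy-Schwarz estimate $\dot X \leq 2\sqrt{XV}$ gives the required uniform bound $X(t) \leq \bar X$, hence $\gamma(X(k\tau)) \geq \gamma(\bar X) > 0$. As long as $\sqrt{V_k} \geq \gamma(\bar X)$, the recursion gives $V_{k+1} \leq V_k - \frac{M\tau}{N}\gamma(\bar X)$ provided $\tau \leq \tau_0$ with $\tau_0 \simeq \gamma(\bar X)/(2C_1)$; consequently $V_k$ decreases linearly at this positive rate and drops below $\gamma(\bar X)^2 \leq \gamma(X(k\tau))^2$ after at most $k^* \leq N V_0/(M\gamma(\bar X)\tau)$ sampling steps, i.e.\ by time $k^*\tau \leq N V_0/(M\gamma(\bar X))$.

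The main technical obstacle lies in justifying the uniform velocity bound $\|\dot v\| \leq C_1$ that underlies the sampling error $\|v_{\perp_j}(t) - v_{\perp_j}(k\tau)\| \leq C_1\tau$: this requires a self-consistent bootstrap in which one first shows, using only the crude estimate $|\frac{d}{dt}V| \leq \frac{2M}{N}\sqrt{V} + O(\tau)$ on a single sampling interval, that $V(t)$ cannot grow uncontrolledly in any fixed time window, and then feeds this a priori bound back into the computation of $C_1$. A secondary but instructive point is that the concentration of the entire budget $M$ on one maximal component, prescribed by Definition \ref{def:u}, is precisely what keeps the discrete rate of decay at the level $-\frac{2M}{N}\sqrt{V_k}$; any more diffuse selection in $U(x,v)$ would weaken the first-order term and risk letting the $O(\tau^2)$ sampling error dominate before the consensus region is reached.
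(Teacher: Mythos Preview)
Your approach is essentially the same as the paper's: both control the drift $v_{\perp_j}(t)-v_{\perp_j}(k\tau)$ on each sampling interval to show that the instantaneous decay $-\frac{2M}{N}\|v_{\perp_j}(k\tau)\|\leq-\frac{2M}{N}\sqrt{V(k\tau)}$ survives up to an $O(\tau)$ error, choose $\tau_0$ so that this error is dominated by $\frac{M}{N}\gamma(\bar X)$, and then use the resulting inequality together with the bound $X(t)\leq\bar X$ (via Lemma~\ref{lem:integrate} / the estimate $\dot X\leq 2\sqrt{XV}$) to force $\sqrt{V}$ below $\gamma(\bar X)$ in finite time.

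The only noteworthy difference is organizational. The paper works in \emph{continuous} time throughout: it introduces the scalar function $\phi(t)=\langle v_{\perp_j}(n\tau),v_{\perp_j}(t)\rangle/\|v_{\perp_j}(n\tau)\|$, first derives the crude growth bound $\sqrt{V(t)}\leq\sqrt{V(s)}+(t-s)M/\sqrt{N}$ from $-\phi(t)\leq\sqrt{N}\sqrt{V(t)}$, differentiates $\phi$ explicitly to get a lower bound, and then proves by induction over the intervals $[n\tau,(n+1)\tau]$ that $V(t)$ is \emph{monotonically decreasing} for all $t\in[0,T]$, not just at sample times. Your discrete recursion $V_{k+1}\leq V_k-\frac{2M\tau}{N}\sqrt{V_k}+C\tau^2$ is the integrated form of exactly this inequality, but by itself it does not control $V$ between sample points; this matters because your constant $C_1$ depends on $\sup_t\sqrt{V(t)}$, and the bound $X(t)\leq\bar X$ also needs $V$ bounded for all $t$. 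The paper's continuous monotonicity gives $V(t)\leq V(0)$ immediately, which closes both loops cleanly. Your ``self-consistent bootstrap'' paragraph is the right fix and amounts to exactly the paper's induction, so this is a presentational rather than a substantive gap. One small inaccuracy: your closing remark that a diffuse selection in $U(x,v)$ would weaken the first-order term is not correct---any admissible selection on $\mathcal{C}_3\cup\mathcal{C}_4$ gives the same leading decay $-\frac{2M}{N}\sqrt{V_k}$ (cf.\ the proof of Theorem~\ref{thm:main}); the sparsity of $u^\circ$ is not needed for the rate.
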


\begin{remark}\label{rem:maxtime}
The maximal sampling time $\tau_0$ depends on the number of agents $N$, the $\ell_1^N-\ell_2^d$-norm bound $M$ on the control,  the initial conditions $(x_0,v_0)$, and the
rate of communication function $a(\cdot)$. The precise  bounding condition \eqref{eq:tau1} 
is given in the proof below.  Moreover, as in Remark \ref{remm4}, the sampled control is switched-off as soon as the sampled trajectory enters the 
region $\mathcal{C}_{1}$. In particular the systems reaches the consensus region defined by \eqref{eq:CScondition}
within time $T \leq T_0= \frac{2 N}{M}(\sqrt{V(0)} - \gamma(\bar X))$, where $\bar X = 2 B(x_{0},x_{0}) + \frac{2N^{4}}{M^{2}} B(v_{0},v_{0})^{2}$. The control is then set to zero in a time that is not larger than 
$ \frac{2 \sqrt{N}}{M}( \sqrt{N}\sqrt{V(0)} - \gamma(\bar X))$.
\end{remark}

{
\begin{proof}[Proof of Theorem \ref{prop:piecewise}]
 Let
$$
\bar X = 2 B(x_{0},x_{0}) + \frac{2N^{4}}{M^{2}} B(v_{0},v_{0})^{2}.
$$
and let $\tau>0 $ satisfy the following condition
\begin{equation}\label{eq:tau1}
 \tau \left(a(0) (1+\sqrt{N}) \sqrt{B(v_{0},v_{0})} + M\right) + \tau^{2} 2 a(0)M    \leq \frac{\gamma(\bar X)}{2}.
\end{equation}
Denote by $(x,v)$ the sampling solution of System~\eqref{control} associated with the control $u^\circ$, the sampling time $\tau$, and the initial datum $(x_{0},v_{0})$.
 Here $[\cdot]$ denotes the integer part of a real number.
Let $\tilde u(t) = u^\circ(x(\tau[t/\tau]), v(\tau [t/\tau ]))$ and denote for simplicity $u^\circ(t) = u^\circ(x(t),v(t))$, then
$\tilde u(t) = u^\circ(\tau[t/\tau ])$.

Let $T >0$  be the smallest time such that $\sqrt{V(T)} = \gamma(\bar X)$ 
with the convention that  $T = +\infty$ if  $\sqrt{V(t)} > \gamma(\bar X)$ for every $t\geq0$.
(If $\sqrt{V(0)} = \gamma(\bar X) \leq \gamma(X(0))$ the system is in the consensus region and there is nothing to prove.)
For almost every $t \in [0, T]$, and by denoting $n = [ {t}/{\tau}]$, we have
\begin{align}
\frac{d}{dt} V(t) &= \frac{d}{dt} B (v(t),v(t))\nonumber\\
& \leq 2 B(\tilde u(t), v(t)) \nonumber\\
& = 2 B(u^\circ(n\tau), v(t)). \label{eq:1101} 
\end{align}
Let $i$ in $\{1,\ldots,N\}$ be the smallest index  such that
$
\|v_{\perp_{i}}(n\tau)\| \geq \|v_{\perp_{k}}(n\tau)\|
$
for every $k \neq i$, 
so that  $u^\circ_{i}(n\tau) = -M v_{\perp_{i}}(n\tau) / \|v_{\perp_{i}}(n\tau)\|$ and $u^\circ_{k}(n\tau) =0$ for every $k \neq i$.
Then~\eqref{eq:1101} reads
\begin{equation}\label{firstestim}
\frac{d}{dt} V(t) \leq - \frac{2M}{N} \phi(t)\,,
\end{equation}
where 
$$
\phi(t) = \frac{\langle v_{\perp_{i}}(n\tau),  v_{\perp_{i}}(t)\rangle}{\| v_{\perp_{i}}(n\tau)\|}.
$$ 
Note that
\begin{equation}\label{estbelow}
\phi(n\tau) = \|v_{\perp_{i}}(n\tau)\| \geq \sqrt{V(n\tau)}. 
\end{equation} 
Moreover, by observing $\|v_{\perp_{i}}(t)\|^2 \leq N \left ( \frac{1}{N} \sum_{j=1}^N  \|v_{\perp_{j}}(t)\|^2 \right )$,
we have also the following estimates from above
\begin{equation}\label{estbelow2}
- \phi(t) \leq  \|v_{\perp_{i}}(t)\| \leq  \sqrt{N} \sqrt{V(t)}.
\end{equation}
We combine \eqref{estbelow2} with \eqref{firstestim} to obtain
$$
\frac{d}{dt} V(t) \leq \frac{2M}{\sqrt N} \sqrt{V(t)},
$$
and, by integrating between $s$ and $t$, we get 
\begin{equation}\label{eq:9292}
\sqrt{V(t)} \leq \sqrt{V(s)} + (t-s)\frac{M}{\sqrt{N}}.
\end{equation}
Now, we prove that $V$ is decreasing in $[0,T]$.
Notice that
\begin{align*}
\frac{d}{dt} v_{\perp_{i}}(t) 
&= \frac{1}{N} \sum_{k\neq i} a(\|x_{k} - x_{i}\|)(v_{\perp_{k}}(t) - v_{\perp_{i}}(t))+ \tilde u_{i}  - \frac{1}{N}\sum_{\ell=1}^{N} \tilde u_{\ell} \\
&= \frac{1}{N} \sum_{k\neq i} a(\|x_{k} - x_{i}\|)(v_{\perp_{k}}(t) - v_{\perp_{i}}(t)) - M \frac{N-1}{N} \frac{v_{\perp_{i}}(n\tau)}{\|v_{\perp_{i}}(n\tau)\|}.
\end{align*}
Moreover, observing that by Cauchy-Schwarz 
$$\sum_{k=1}^{N} \|v_{\perp_{k}}\| \leq \sqrt{N} \left (\sum_{k=1}^{N} \|v_{\perp_{k}}\|^2 \right)^{1/2} = N \left ( \frac{1}{N}\sum_{k=1}^{N} \|v_{\perp_{k}}\|^2 \right)^{1/2},
$$
we have the following sequence of estimates
$$
\frac{1}{N} \sum_{k\neq i}\|v_{\perp_{k}}(t) - v_{\perp_{i}}(t)\| \leq 
\frac{1}{N} \sum_{k\neq i}\|v_{\perp_{k}}(t)\| + \| v_{\perp_{i}}(t)\| 
 \leq\frac{1}{N} \sum_{k=1}^{N} \|v_{\perp_{k}}(t)\| + \sqrt{N} \sqrt{V(t)}
 \leq (1+\sqrt{N})\sqrt{V(t)}.
$$
Hence
\begin{align*}
\frac{d}{dt} \phi(t) &= 
 \frac{\langle v_{\perp_{i}}(n\tau),  \dot v_{\perp_{i}}(t)\rangle}{\| v_{\perp_{i}}(n\tau)\|}\\
 &= \frac{1}{N\|v_{\perp_{i}}(n\tau)\|} \sum_{k\neq i} a(\|x_{k} - x_{i}\|)
 \langle v_{\perp_{k}}(t) - v_{\perp_{i}}(t), v_{\perp_{i}}(n\tau) \rangle - \frac{N-1}{N}M\\
 &\geq - \frac{1}{N} a(0)\sum_{k\neq i}\|v_{\perp_{i}}(t) - v_{\perp_{k}}(t)\|  - M\\
 &\geq -a(0)(1+\sqrt{N}) \sqrt{V(t)}  - M.
\end{align*}
By mean-value theorem  there exists $\xi \in [n\tau,t]$ such that
$$
\phi(t) \geq \phi(n\tau) - (t-n\tau) \left(a(0)(1+\sqrt{N}) \sqrt{V(\xi)}  + M\right).
$$
Then, using the growth estimate~\eqref{eq:9292} on $\sqrt{V}$, and estimating 
$\sqrt{V(\xi)}$  from above by $\sqrt{V(n\tau)} + \tau {M}/{\sqrt{N}}$, we have
$$
\phi(t) \geq \phi(n\tau) -  \tau \left(a(0)(1+\sqrt{N}) \sqrt{V(n\tau)} +M \right) - \tau^{2} 2 a(0) M .
$$
Plugging this latter expression again in~\eqref{firstestim} and using \eqref{estbelow}, we have 
\begin{equation}\label{eq:2210}
\frac{d}{dt} V(t)  \leq - \frac{2M}{N} \left(\sqrt{V(n\tau)}
-  \tau \left(a(0)(1+\sqrt{N}) \sqrt{V(n\tau)} + M \right) - \tau^{2} 2 a(0) M \right).
\end{equation}

We prove by induction on $n$ that $V(t)$ is decreasing on $[0, T]$. 
Let us start on $[0,\tau]$ by assuming $\sqrt{V(0)} > \gamma(\bar X)$, otherwise we are already in the consensus region and
there is nothing further to prove.
By~\eqref{eq:2210} and using the condition~\eqref{eq:tau1} on $\tau$, we infer 
\begin{align}
\frac{d}{dt} V(t) &\leq  -\frac{2M}{N} \left( \sqrt{V(0)}  -  \tau \left(a(0)(1+\sqrt{N}) \sqrt{V(0)} + M\right) - \tau^{2} 2 a(0) M \right)
\nonumber\\ 
&\leq -\frac{2M}{N}\left( \gamma(\bar X)  -  \frac{\gamma(\bar X)}{2} \right)\nonumber\\ 
& = -\frac{M}{N} \gamma(\bar X) < 0.
\label{eq:6473}
\end{align}
Now assume that $V$ is actually decreasing on $[0,n\tau]$, $n\tau < T$, and thus $\sqrt{V(n\tau)} > \gamma(\bar X)$. Let us prove that $V$ is decreasing also on $[n\tau, \min\{T,(n+1)\tau\}]$. 
For every $t\in (n\tau, \min\{T,(n+1)\tau\})$, we can recall again equation \eqref{eq:2210}, and use the inductive hypothesis of monotonicity
for which $\sqrt{V(0)} \geq \sqrt{V(n \tau)}$, and  the condition~\eqref{eq:tau1} on $\tau$ to show
\begin{align}
\frac{d}{dt} V(t)  &\leq - \frac{2M}{N}\left(\sqrt{V(n\tau)} -  \tau \left(a(0)(1+\sqrt{N}) \sqrt{V(n\tau)} + M \right) - \tau^{2} 2 a(0) M \right) \nonumber \\
&\leq -\frac{2M}{N}\left( \gamma(\bar X)  -  \tau \left(a(0)(1+\sqrt{N}) \sqrt{V(0)} + M\right) - \tau^{2} 2 a(0) M \right) \nonumber \\
&\leq -\frac{M}{N} \gamma(\bar X) < 0.\nonumber 
\end{align}
This proves that $V$ is decreasing on $[0,T]$.

Let us now use a bootstrap argument to derive an algebraic rate of convergence towards the consensus region.
For every $t \in (0,T)$ by using~\eqref{eq:2210}, the fact that $V$ is decreasing, and the condition~\eqref{eq:tau1} on $\tau$ we have
\begin{align*}
\frac{d}{dt} V(t) &\leq - \frac{2M}{N} \left(\sqrt{V(n\tau)} -  \frac{\gamma(\bar X)}{2}\right)\\
&\leq - \frac{M}{N} \sqrt{V(t)}.
\end{align*}
Then
$$
\sqrt{V(t)} \leq \sqrt{V(0)} - \frac{M}{2N}t,
$$
for every $t\in [0,T]$. Finally we get $T \leq 2 N(\sqrt{V(0)} - \gamma(\bar X))/M$. Moreover, since 
$\max_{1\leq i\leq N} \|v_{\perp_{i}}\| \leq \sqrt{N}\sqrt{V(t)}$, then the control switches off after a time smaller than or equal to $2 \sqrt{N}(\sqrt{N}\sqrt{V(0)} - \gamma(\bar X))/M$.
\end{proof}
\subsection{Componentwise sparse selections are absolutely continuous solutions}\label{sec:proofthm2}
We are now ready to prove Theorem~\ref{thm:main2}.

\begin{proof}[Proof of Theorem~\ref{thm:main2}]
Denote by $z=(x,v)$ an element of $\RN\times \RN$. Fix $z_{0} = (x_{0},v_{0}) \in \RN\times \RN$. 
Let $\tau_{0}$ be the sampling time in Theorem~\ref{prop:piecewise} determining a sampling solution converging to consensus.
For every $n > 1/\tau_{0}$ consider the sampling solution $z_{n}$ of \eqref{control} associated with the feedback $u^{\circ}$, the sampling time $1/n$, and the initial datum $z_{0}$.
Let $u_{n}(t) = u^\circ(z_{n}([nt]/n))$ and 
let $\mathfrak{u}_{n}(t)$ be the extension of $u_{n}(t)$ to $\RN \times \RN$ which is zero on the first $dN$ components and equal to $u_{n}(t)$ on the last $dN$.
If $f(z)= (v, -L_{x} v)$ we have that
$$
z_{n}(t) = z_{0} + \int_{0}^{t} \left (f(z_{n}(s)) + \mathfrak{u}_{n}(s) \right )ds.
$$
For a suitable constant $\alpha>0$, the linear growth estimate  $\|f(z)\| \leq \alpha (\|z\| + 1)$ holds, so that, in particular we have
$$
\|z_{n}(t)\| \leq \frac{e^{\alpha t} (\alpha\|z_{0}\| + \alpha+M) - \alpha -M}{\alpha},
$$
where the bound is uniform in $n$. 
Let, as in Remark~\ref{rem:maxtime}, $T = \frac{2 \sqrt{N}}{M}( \sqrt{N}\sqrt{B(v_{0},v_{0})} - \gamma(\bar X))$, where 
$\bar X = 2 B(x_{0},x_{0}) + \frac{2N^{4}}{M^{2}} B(v_{0},v_{0})^{2}$. Note that $T$ does not depend on $n$.
Therefore the sequence of continuous functions $(z_{n})_{n \in \mathbb N}$ is  equibounded by the constant 
$$
C=\frac{e^{\alpha T} (\alpha\|z_{0}\| + \alpha+M) - \alpha -M}{\alpha}.
$$
The sequence $(z_{n})_{n \in \mathbb N}$ is also equicontinuous. Indeed 
\begin{equation}\label{eq:0392}
\|z_{n}(t) - z_{n}(s)\|  \leq \int_{s}^{t} \left ( \|f(z_{n}(\xi))\| + M  \right )d\xi \leq (t-s)(\alpha (C + 1) + M)
\end{equation}
for every $n$.
For every $\varepsilon > 0$,  if $\delta = \varepsilon /(\alpha (C + 1) + M)>0$ then for every $n$ one has
$\|z_{n}(t) - z_{n}(s)\| < \varepsilon$ whenever $|t-s|<\delta$.
By Ascoli--Arzel\`a Theorem, up to subsequences, $z_{n}$ converges uniformly to an absolutely continuous function $z$ as $n$ tends to infinity. 

Let us prove that $z$ is a Filippov solution of~\eqref{eq:inclusion}.
By continuity $f(z_{n}(t))$ converges to $f(z(t))$ for almost every $t$. 
Since 
$$
\int_{s}^{t}\mathfrak{u}_{n}(\xi) d\xi = z_{n}(t) - z_{n}(s) - \int_{s}^{t} f(z_{n}(\xi))  d\xi,
$$
then, by~\eqref{eq:0392}, Dunford--Pettis Theorem (see, for instance, \cite[Theorem IV.29]{brezis}) applies and $u_{n}$ converges weakly in $L^{1}$ to an admissible control $u$ as $n$ tends to infinity. Denote, as above, by $\mathfrak{u}$ the extension of $u$ to $\RN \times \RN$ which is zero on the first $dN$ components.
By the dominated convergence Theorem, the limit function $z$ satisfies
$$
z(t) = z_{0} + \int_{0}^{t} \left ( f(z(s)) + \mathfrak{u}(s) \right )ds.
$$
The map $z \to U(z)$ is actually upper hemicontinuous in the sense of \cite[Definition~1 p. 59] {AubinCellina}because $U(z)$ is a polytope which is just continuously perturbed and at most looses dimensionality whenever we continuously
perturb $z$. In particular, it can never gain dimensionality. Moreover, all the conditions of \cite[Theorem~1 p. 60]{AubinCellina} are fulfilled for $(x,y)=(z,u)$ and $F(x) = U(z)$ in its notations, implying that  $u \in U(z)$, i.e., it is a solution of the variational problem~\eqref{eq:variational} and $z$ is therefore a Filippov solution of the differential inclusion~\eqref{eq:inclusion}.
\end{proof}

\section{Sparse is Better}\label{sec:optimality}

\subsection{Instantaneous optimality of componentwise sparse controls}
The componentwise sparse control $u^\circ$ of Definition \ref{def:u} corresponds to the strategy of acting, at each switching time, on the agent whose consensus parameter is farthest from the mean and to steer it to consensus.
Since this control strategy is designed to act on at most one agent at each time, we claim that in some sense it is instantaneously the ``best one''.
To clarify this notion of {\it instantaneous optimality} which also explains its {\it greedy} nature, we shall compare this strategy with all other feedback strategies $u(x,v)\in U(x,v)$ and discuss their efficiency in terms of the instantaneous decay rate of the functional $V$.

\begin{proposition}\label{prop:sparseisbetter}
The feedback control $u^\circ(t)=u^\circ (x(t),v(t))$ of Definition \ref{def:u}, associated with the solution $((x(t),v(t))$ of Theorem \ref{thm:main2}, is a minimizer of 
$$
\mathscr R(t,u)=\frac{d}{dt} V(t),
$$
over all possible feedback controls in $U(x(t),v(t))$. In other words, the feedback control $u^\circ$ is the best choice in terms of the rate of convergence to consensus.
\end{proposition}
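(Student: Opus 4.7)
The plan is to reduce the instantaneous minimization to a one-line Cauchy--Schwarz estimate on the bilinear form $B$. First I would differentiate $V(t)=B(v(t),v(t))$ along the controlled dynamics \eqref{control}, obtaining
$$
\frac{d}{dt}V(t) \;=\; -2\,B(L_{x(t)}v(t),v(t)) \,+\, 2\,B(u,v(t)).
$$
The Laplacian term depends only on the current state $(x(t),v(t))$, while the control enters linearly through $2\,B(u,v(t))$. Consequently, minimizing $\mathscr{R}(t,u)$ over $u\in U(x(t),v(t))$ is equivalent to minimizing the linear functional $u\mapsto B(u,v(t))$ on the same set.

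Next, I would establish a universal lower bound for $B(u,v)$ valid for every control subject to the admissibility constraint $\sum_{i=1}^N\|u_i\|\leq M$. Since $B(u,v)=B(u,v_\perp)=\frac{1}{N}\sum_{i=1}^N\langle u_i,v_{\perp_i}\rangle$, the componentwise Cauchy--Schwarz inequality followed by H\"older gives
$$
B(u,v) \;\geq\; -\frac{1}{N}\sum_{i=1}^N\|u_i\|\,\|v_{\perp_i}\| \;\geq\; -\frac{1}{N}\Bigl(\max_{1\leq i\leq N}\|v_{\perp_i}\|\Bigr)\sum_{i=1}^N\|u_i\| \;\geq\; -\frac{M}{N}\max_{1\leq i\leq N}\|v_{\perp_i}\|.
$$
This bound holds a fortiori on the subset $U(x(t),v(t))$.

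Finally I would verify that $u^\circ$ of Definition~\ref{def:u} saturates the above inequality. When $u^\circ\neq 0$, it concentrates the entire $\ell_1^N$-budget on the smallest index $j$ realizing $\max_i\|v_{\perp_i}\|$, in the direction $-v_{\perp_j}/\|v_{\perp_j}\|$; a direct computation gives
$$
B(u^\circ,v) \;=\; \frac{1}{N}\langle u^\circ_j, v_{\perp_j}\rangle \;=\; -\frac{M}{N}\,\|v_{\perp_j}\| \;=\; -\frac{M}{N}\max_{1\leq i\leq N}\|v_{\perp_i}\|,
$$
so equality is attained in all three Cauchy--Schwarz/H\"older inequalities above: anti-parallelism, mass-concentration on the maximal index, and saturation of the $\ell_1$-budget. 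In the complementary case $u^\circ=0$ there is nothing to check. By Remark~\ref{rk:Uxv}, $u^\circ$ belongs to $U(x(t),v(t))$, hence it minimizes $B(\cdot,v(t))$ on the whole admissible set and in particular on $U(x(t),v(t))$, which by the first step means that $u^\circ$ minimizes $\mathscr{R}(t,\cdot)$.

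There is really no serious obstacle in this argument; the proposition is essentially the statement that, among linear functionals constrained by a mixed $\ell_1^N$--$\ell_2^d$-ball, the minimum is realized by a Dirac-type (one-nonzero-block) vector aligned with the steepest coordinate. The only bookkeeping point to mind is matching the threshold defining $u^\circ$ with the partition $\mathcal{C}_1,\ldots,\mathcal{C}_4$, so that $u^\circ\in U(x,v)$ in every case.
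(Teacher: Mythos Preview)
Your argument is correct and follows essentially the same route as the paper: both isolate the control-dependent term $2B(u,v)=\tfrac{2}{N}\sum_i\langle u_i,v_{\perp_i}\rangle$ in $\tfrac{d}{dt}V$ and then observe that under the $\ell_1^N$--$\ell_2^d$ budget this linear functional is minimized by putting all mass on a maximal $\|v_{\perp_i}\|$, aligned opposite to $v_{\perp_i}$. The paper gets there by first restricting to the parametrized form \eqref{eq:distr} (using $\sum_i v_{\perp_i}=0$ to kill the $\bar u$ cross-term) and then solving the linear program $\max\sum_j\alpha_j\|v_{\perp_j}\|$; you instead apply Cauchy--Schwarz and H\"older directly to arbitrary admissible $u$, which is slightly cleaner and in fact yields the stronger statement that $u^\circ$ minimizes $B(\cdot,v)$ over the whole constraint ball, not just over $U(x,v)$.
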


\begin{proof}
Consider
\begin{align*}
\frac{d}{dt} V(t) & = \frac{1}{N} \frac{d}{dt}  \sum_{i=1}^{N}  \|v_{\perp_{i}}\|^{2}\\
&=  \frac{2}{N}  \sum_{i=1}^{N} \langle \dot v_{\perp_{i}}, v_{\perp_{i}} \rangle\\
&= \frac{2}{N^{2}} \sum_{i=1}^{N}\sum_{j=1}^{N} a(\|x_{i} - x_{j}\|) (\langle v_{\perp_{i}}, v_{\perp_{j}} \rangle - \|v_{\perp_{i}}\|^{2}) + 
\frac{2}{N} \sum_{i=1}^{N}\langle u^\circ_{i} - \frac{1}{N}\sum_{j=1}^{N}u^\circ_{j} ,v_{\perp_{i}} \rangle.
\end{align*}
Now consider controls $u_{1},\ldots,u_{N}$ of the form~\eqref{eq:distr}, then
\begin{align*}
\sum_{i=1}^{N}\langle u_{i} - \frac{1}{N}\sum_{j=1}^{N}u_{j} ,v_{\perp_{i}} \rangle &=
- \sum_{\{i\ \vert\  v_{\perp_{i}}\neq 0\}} \alpha_{i} \|v_{\perp_{i}}\| + 
\frac{1}{N} \sum_{\{i\ \vert\  v_{\perp_{i}}\neq 0\}}\ \ \sum_{\{j\ \vert\  v_{\perp_{j}}\neq 0\}} 
\alpha_{j}
\frac{\langle v_{\perp_{i}}, v_{\perp_{j}}\rangle}{\|v_{\perp_{j}}\|}\\
&= - \sum_{\{i\ \vert\  v_{\perp_{i}}\neq 0\}} \alpha_{i} \|v_{\perp_{i}}\| +
\frac{1}{N} \sum_{\{j\ \vert\  v_{\perp_{j}}\neq 0\}} \left \langle
\underbrace{ 
\sum_{\{i\ \vert\  v_{\perp_{i}}\neq 0\}} v_{\perp_i}}_{=0}, \alpha_j \frac{ v_{\perp_{j}}}{\| v_{\perp_{j}} \| } \right \rangle\\
&= - \sum_{\{i\ \vert\  v_{\perp_{i}}\neq 0\}} \alpha_{i} \|v_{\perp_{i}}\|
\end{align*}
since, by definition, $\sum_{i=1}^N v_{\perp_{i}} \equiv 0$. 
Then maximizing the decay rate of $V$ is equivalent to solve
\begin{equation}\label{eq:mejo}
\max \sum_{j=1}^{N} \alpha_{j}\|v_{\perp_{j}}\|, \quad \mbox{ subject to } \alpha_j\geq 0,\ \sum_{j=1}^{N} \alpha_{j} \leq M.
\end{equation}
In fact, if the index $i$ is such that $\|v_{\perp_{i}}\| \geq \|v_{\perp_{j}}\|$ for $j \neq i$ as in the definition of $u^\circ$, then
$$\sum_{j=1}^{N} \alpha_{j}\|v_{\perp_{j}}\| \leq \|v_{\perp_{i}}\| \sum_{j=1}^{N} \alpha_{j} \leq M \|v_{\perp_{i}}\|.$$
Hence the control $u^\circ$ is a maximizer of \eqref{eq:mejo}. This variational problem has a unique solution whenever there exists a unique $i\in \{1,\ldots,N\}$ such that $\|v_{\perp_{i}}\| > \|v_{\perp_{j}}\|$ for every $j \neq i$.
\end{proof}

This result is somewhat surprising with respect to the perhaps more intuitive strategy of activating controls on more agents or even (although not realistic) all the agents at the same time as given in Proposition \ref{prop:stable}. 
This can be viewed as a mathematical description of the following general principle: 
\begin{center}
\fbox{\parbox[c][1.7cm]{15.1cm}{\it A policy maker, who is not allowed to have prediction on future developments, should always consider more favorable to intervene with stronger actions on the fewest possible instantaneous optimal leaders than
trying to control more agents with minor strength.}}
\end{center}

\begin{example}\label{ex:symm}
The limit case when the action of the sparse stabilizer and of a control acting on all agents are equivalent is represented by the symmetric case in which there exists a set of indices $\Lambda=\{i_1, i_2, \dots, i_k\}$ such that $\|v_{\perp_{i_\ell}}\| =\|v_{\perp_{i_m}}\|$ and $ \|v_{\perp_{i_\ell}}\| > \|v_{\perp_{j}}\|$ for every $j \notin \Lambda$ and for all $i_\ell,i_m \in \Lambda$. In this case, indeed, the equation \eqref{eq:mejo} of the proof of Proposition \ref{prop:sparseisbetter} has more solutions. Consider four agents on the plane $\R^{2}$ with initial main states
$x_{1}(0) = (-1,0), x_{2}(0) = (0,1), x_{3}(0) = (1,0), x_{4}(0) = (0,-1)$ and consensus parameters
 $v_{1}(0)=(-1,0), v_{2}(0)=(0,1), v_{3}(0)=(1,0), v_{4}(0)=(0,-1)$. Let the interaction function be
 $a(x) = 2/(1+x^{2})$ and the bound on the control be $M=1$. In Figure~\ref{fig:sparsenonsparse} we represent the time evolution of the velocities of this system. 
The free evolution of the system is represented in red. The evolution under the action of the sparse control $u^\circ$ is  in blue while in green the system under the action of a ``distributed'' control acting on all the four agents simultaneously with $\alpha_{1} = \cdots = \alpha_{4}=1/4$.
\\ 
The system reaches the consensus region within a time $t=3.076$ under the action of both the distributed and the sparse control.
\end{example}

\begin{example}\label{ex:20}
We consider a group of $20$ agents starting with positions on the unit circle and with velocities pointing in various directions. Namely, 
$$
x_{i}(0)=(\cos(i+\sqrt(2)),\cos(i+2\sqrt(2)) ) \mbox{ and } v_{i}(0)=(2 \sin(i \sqrt(3)-1), 2 \sin(i \sqrt(3)-2)).
$$  
The initial configuration is represented in Figure~\ref{fig:init}. We consider that the interaction potential, as in the Cucker--Smale system is of the form~\eqref{eq:CSpotential} with $K=\sigma=\beta=1$, that is
$$
a(x) = \frac{1}{1+x^{2}}.
$$
The sufficient condition for consensus \eqref{eq:CScondition} then reads
$$
\sqrt{V} \leq \frac{1}{\sqrt{2N}} \left(\frac{\pi}{2} - \arctan(\sqrt{2NX})\right).
$$
The system in free evolution does not tend to consensus, as showed in Figure~\ref{fig:uncontrolled}. After a time of $100$ the quantity $\sqrt{V(100)} \simeq 1.23$
while 
$
\gamma(X(100)) \simeq 0.10.$

On the other hand the componentwise sparse control steers the system to consensus in time $t=22.3$. Moreover the totally distributed control, acting on the whole group of $20$ agents, steers the system in a larger time, $t=27.6$. The time evolution of  $\sqrt{V}$ and of $\gamma(X)$ is represented in Figure~\ref{fig:20}. In Figure~\ref{fig:20zoom} the detail of the moment in which the two systems enter the consensus region.

\end{example}

\begin{figure}
\centering
\includegraphics[width=10cm]{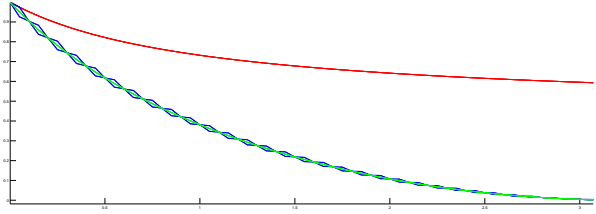}
\caption{The time evolution of the modulus of the velocities in the fully symmetric case of Example~\ref{ex:symm}. In  red the free evolution of the system, in blue the evolution under the action of a sparse control,
and in green the system under the action of a distributed control.}
\label{fig:sparsenonsparse}
\end{figure}

\begin{figure}
\centering
\includegraphics[width=6cm]{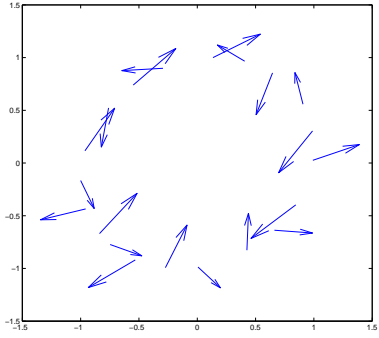}
\caption{The initial configuration of Example~\ref{ex:20}.}
\label{fig:init}
\end{figure}

\begin{figure}
\centering
\includegraphics[width=10cm]{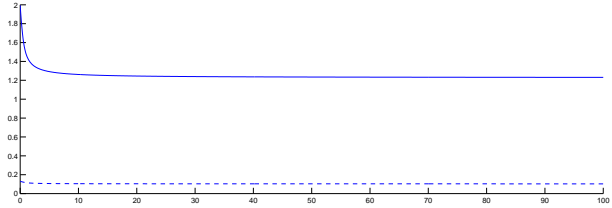}
\caption{The time evolution for $t\in [0,100]$ of $\sqrt{V(t)}$ (solid line) and of the quantity $\gamma(X(t))$ (dashed line). The system does not reach the consensus region.}
\label{fig:uncontrolled}
\end{figure}

\begin{figure}
\centering
\includegraphics[width=10cm]{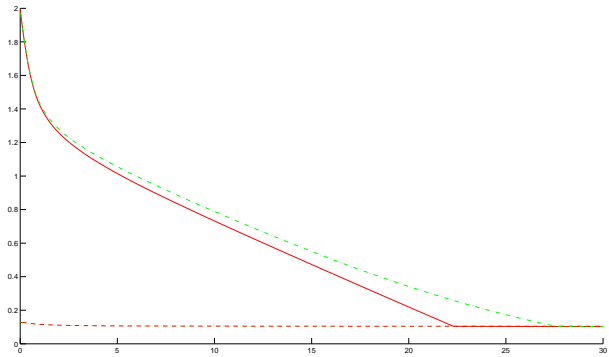}
\caption{Comparison between the actions of the componentwise sparse control and the totally distributed control. The time evolution for $t\in [0,30]$ of $\sqrt{V(t)}$ (solid line in the sparse case and dash-dot line in the distributed case) and of $\gamma(X(t))$ (dashed line in the sparse case and dotted line in the distributed case).}
\label{fig:20}
\end{figure}

\begin{figure}
\centering
\includegraphics[width=10cm]{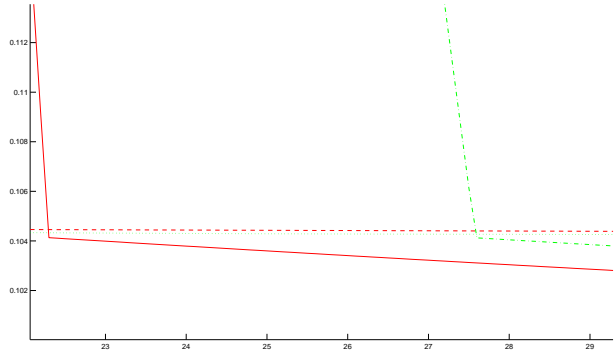}
\caption{Detail of the time evolution of $\sqrt{V(t)}$ and of $\gamma(X(t))$ under the action of the componentwise sparse control and the completely distributed control near the time in which the two systems enter the consensus region. The solid line represents the evolution of $\sqrt{V(t)}$ under the action of the componentwise sparse control and the dash-dot line the evolution of $\sqrt{V(t)}$ under the action of the distributed control. The dashed line represents the evolution of $\gamma(X(t))$ 
under the action of the componentwise sparse control and the  dotted line the evolution of $\gamma(X(t))$  under the action of the distributed control.}
\label{fig:20zoom}
\end{figure}

\subsection{Complexity of consensus}\label{sec:complexity}

The problem of determining minimal data rates for performing
control tasks has been considered for more than twenty years.
Performing control with limited data
rates incorporates ideas from both control and information
theory and it is an emerging area, see the survey Nair,
Fagnani, Zampieri and Evans \cite{nafazaev07}, and the references within the recent paper \cite{co12}.
Similarly, in {\it Information Based Complexity} \cite{ibc1,ibc2}, which is a branch of theoretical numerical analysis, one investigates which are the {\it minimal amount of algebraic operations} required by {\it any} algorithm in order 
perform accurate numerical approximations of functions, integrals, solutions of differential equations etc., given that the problem applies on a class of functions or on a class of solutions. 

We would like to translate
such concepts of {\it universal complexity} (universal because it refers to the best possible algorithm for the given problem over an entire class of functions) to our problem of optimizing the external intervention on the system in order to achieve consensus.

For that,  and for any vector $w \in \mathbb R^d$, let us denote $\operatorname{supp}(w):=\{ i \in \{1,\dots,d\}: u_i \neq 0 \}$ and $\# \operatorname{supp}(w)$ its cardinality.
Hence, we define the {\it minimal number of external interventions} as the sum of the actually activated components of the control $\# \operatorname{supp}(u(t_\ell))$ at each switching time $t_\ell$, which a policy maker should provide by using {\it any} feedback control strategy $u$ in order to steer the system to consensus at a given time $T$. Not being the switching times $t_0,t_1,\dots, t_\ell, \dots$ specified a priori, such a sum simply represents the amount of communication requested to the policy maker to activate and deactivate individual controls by informing the corresponding agents of the current mean consensus parameter $\bar v$ of the group. (Notice that here, differently from, e.g., \cite{nafazaev07}, we do not yet consider quantization of the information.)

More formally, given a suitable compact set $\mathcal K \subset \RN \times \RN$ of initial conditions, the $\ell_1^N-\ell_2^d$-norm control bound $M>0$, the set of corresponding admissible 
feedback controls $\mathscr U(M)$ with values in $B_{\ell_1^N-\ell_2^d}(M)$, the number of agents $N \in \mathbb N$,
and an arrival time $T>0$, we define the {\it consensus number} as
\begin{eqnarray*}
n&:=&n(N,\mathscr U(M),\mathcal K,T) \\
&=& \inf_{u \in \mathscr U(M)} \left \{ \sup_{(x_0,v_0) \in \mathcal K}\left \{ \sum_{\ell=0}^{k-1} \# \operatorname{supp}(u(t_\ell)): (x(T;u),v(T,u)) \mbox{ is in the consensus region }  \right \} \right \}.
\end{eqnarray*}
Although it seems still quite difficult to give a general lower bound to the consensus numbers, Theorem  \ref{prop:piecewise} actually allows us to provide at least upper bounds:  for 
$T_0 = T_0(M,N,x_0,v_0,a(\cdot)) = \frac{2N}{M}(\sqrt{V(0)} - \gamma(\bar X))$,
and $\tau_0=\tau_0(M,N,x_0,v_0,a(\cdot))$ as in Theorem \ref{prop:piecewise} and Remark \ref{rem:maxtime}, we have the following upper estimate 
\begin{equation}\label{eq:complx}
n(N,\mathscr U(M),\mathcal K,T) \leq \left \{ \begin{array}{ll}
\infty, & T< T_0\\
 \frac{\sup_{(x_0,v_0) \in \mathcal K}  T_0(M,N,x_0,v_0,a(\cdot))}{
\inf_{(x_0,v_0) \in \mathcal K}\tau_0(M,N,x_0,v_0,a(\cdot))},& T\geq T_0
\end{array} \right . .
\end{equation}
Depending on the particular choice of the rate of communication function $a(\cdot)$, such upper bounds can be actually computed, moreover, one can also quantify them over a class of  communication functions $a(\cdot)$ in a bounded set $\mathscr A \subset L^1(\mathbb R_+)$, simply by estimating the supremum.\\
The result of instantaneous optimality achieved in Proposition \ref{prop:sparseisbetter} suggests that the sampling strategy of Theorem \ref{prop:piecewise} is likely to be close to optimality in the sense that the upper bounds \eqref{eq:complx} should be
close to the actual consensus numbers. Clarifying this open issue will be the subject of further investigations which are beyond the scope of this paper.

\section{Sparse Controllability Near the Consensus Manifold}\label{sec:controll}
In  this section we address the problem of controllability near the consensus manifold. The stabilization results of Section~\ref{sec:stable} provide a constructive strategy to stabilize the multi-agent system~\eqref{control}: the system is first steered to the region of consensus, and then in free evolution reaches consensus in infinite time. Here we study the local controllability near consensus, and infer a global controllability result to consensus.

The following result states that, almost everywhere, local controllability near the consensus manifold is possible by acting on only one arbitrary component of a control, in other words whatever is the controlled agent it is possible to steer a group, sufficiently close to a consensus point, to any other desired close point.
Recall that the consensus manifold is $\RN\times\Vf$, where $\Vf$ is defined by \eqref{defVF}.

\begin{proposition}\label{prop:local}
For every $M>0$, for almost every $\tilde x \in \RN$ and for every $\tilde v \in \Vf$, for every time $T>0$, there exists a neighborhood $W$ of $(\tilde x,\tilde v)$ in $\RN\times\RN$ such that, for all points $(x_0,v_0)$ and $(x_1,v_1)$ of $W$, for every index $i \in \{1,\ldots,N\}$, there exists a componentwise and time sparse control $u$ satisfying the constraint \eqref{const_cont}, every component of which is zero except the $i^\textrm{th}$ (that is, $u_{j}(t) =0$ for every $j\neq i$ and every $t \in [0,T]$), steering the control system \eqref{control} from $(x_0,v_0)$ to $(x_1,v_1)$ in time $T$.
\end{proposition}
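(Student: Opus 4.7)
The plan is to reduce the problem to controllability of the linearization of system \eqref{control} at the equilibrium $(\tilde x,\tilde v)$ and then apply the classical local controllability principle for nonlinear systems; the smallness of $W$ will take care both of the constraint \eqref{const_cont} and of the sparsity requirements.

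First I would linearize. Writing $x=\tilde x+\delta x$, $v=\tilde v+\delta v$, I use that $\tilde v\in\Vf$ implies $L_{\tilde x}\tilde v=0$ and, more importantly, that every partial derivative $\partial_{x_k}(L_x v)\big|_{(x,v)=(\tilde x,\tilde v)}$ vanishes, because the $x$-derivatives of the coefficients $a(\|x_j-x_i\|)/N$ are multiplied by differences $v_j-v_i$ that are zero at consensus. Hence the linearized control system is the LTI system
\[
\dot{\delta x}=\delta v,\qquad \dot{\delta v}=-L_{\tilde x}\,\delta v+u,
\]
which decouples into $d$ identical scalar systems on $\R^{2N}$ of the form $\dot z=A_0 z+b_i\,u_i^{(\ell)}$, with
\[
A_0=\begin{pmatrix} 0 & I_N\\ 0 & -L_{\tilde x}\end{pmatrix},\qquad b_i=\begin{pmatrix}0\\ e_i\end{pmatrix},
\]
where only the $i$-th component of $u$ is activated.

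The key step is the Kalman rank test for $(A_0,b_i)$. A direct computation of $A_0^k b_i$ shows that the reachable subspace has dimension $2N$ if and only if $\{e_i,L_{\tilde x}e_i,\dots,L_{\tilde x}^{N-1}e_i\}$ spans $\R^N$, i.e., $e_i$ is a cyclic vector for the symmetric matrix $L_{\tilde x}$. By the spectral theorem this is equivalent to requiring that $L_{\tilde x}$ have $N$ distinct eigenvalues and that $e_i$ have nonzero projection onto each eigenspace. Both conditions are expressed by the non-vanishing of real-analytic functions of $\tilde x$ (the discriminant of the characteristic polynomial of $L_{\tilde x}$ and the coordinates of its eigenvectors), so their failure sets are proper real-analytic subvarieties of $\RN$, hence of Lebesgue measure zero. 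This gives the ``almost every $\tilde x$'' clause, uniformly in $i$.

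From controllability of the linearization and the standard open-mapping/implicit function argument, the nonlinear system \eqref{control} is locally controllable at $(\tilde x,\tilde v)$ in any time $T>0$ using controls supported on the $i$-th agent alone. Continuous dependence of the steering cost on the endpoints gives that, after shrinking $W$ if necessary, the control needed to connect any two points of $W$ is small enough in $L^\infty$-norm to satisfy $\sum_j\|u_j(t)\|=\|u_i(t)\|\leq M$ for every $t\in[0,T]$. Componentwise sparsity is built in by construction; time sparsity is obtained by approximating the smooth steering control by piecewise-constant controls with finitely many switchings on $[0,T]$ and correcting the resulting small terminal error via the controllability of the linearization itself. The main obstacle I anticipate is the genericity step: one must verify carefully that the set of $\tilde x\in\RN$ for which $L_{\tilde x}$ fails to have simple spectrum or for which some $e_i$ lies in an eigenhyperplane is truly a null set, which requires tracking how the entries of $L_{\tilde x}$ depend real-analytically on $\tilde x$ and ruling out degenerate directions in $\RN$.
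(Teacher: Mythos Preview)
Your approach is essentially the paper's: linearize at the consensus equilibrium (where, as you note, the $x$-derivatives of $L_x v$ vanish), check the Kalman rank condition for the single-input system, argue genericity of the rank condition in $\tilde x$, and conclude local controllability via the implicit function theorem; the paper differs only cosmetically in that it first strips off the $x$-integrator to work with $\dot v=-L_{\tilde x}v+e_i u$ on $\R^N$ and then diagonalizes $L_{\tilde x}$ to display the Kalman matrix as a Vandermonde-type determinant, which yields exactly your cyclic-vector criterion (simple spectrum plus nonzero projection of $e_i$ on each eigenline). On the genericity step the paper is just as informal as you are---it simply says the conditions are ``algebraic on the coefficients of $A$'' and passes to ``almost every $\tilde x$''---so the concern you flag about whether the failure set is truly Lebesgue-null when $a$ is merely $C^1$ (hence the entries of $L_{\tilde x}$ are not real-analytic in $\tilde x$) is a gap shared with the original and not a defect of your argument relative to it.
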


\begin{proof}
Without loss of generality we assume $i=1$, that is we consider the system~\eqref{control} with  a control acting only on the dynamics of $v_{1}$.
Given  $(\tilde x,\tilde v) \in \RN \times \Vf$ we  linearize the control system \eqref{control} at the consensus point $(\tilde x,\tilde v)$, and get $d$ decoupled systems on $\R^{N} \times \R^{N}$ 
\begin{equation*}
\left\{
\begin{split}
\dot x^{k} &= v^{k}\\
\dot v^{k} & = -L_{\tilde x} v^{k} + Bu\,,
\end{split}
\right.
\end{equation*}
for every $k=1, \ldots, d$  where
\begin{equation*}
B = \begin{pmatrix} 1 \\ 0\\ \vdots \\ 0 \end{pmatrix}.
\end{equation*}
To prove the local controllability result, we use the Kalman condition. It is sufficient to consider the  decoupled control sub-systems corresponding to each value of $k=1, \ldots, d$. Moreover the equations for $x^k$ do not affect the Kalman condition, the $x^{k}$ plays only the role of an integrator.
Therefore we reduce the investigation of the Kalman condition for a linear system on $\R^{N}$ of the form
\begin{equation}\label{eq:lapcontr}
\dot v = A v +B u, \mbox{ where } A = - L_{\bar x}. 
\end{equation}
Since $A$ is a Laplacian matrix then there exists an orthogonal matrix $P$ such that
$$
D:=P^{-1}AP = \begin{pmatrix}
0 & 0 & \cdots & 0 \\
0 & \lambda_2 & \ddots & \vdots \\
\vdots  &    \ddots      &  \ddots  &  0  \\
0  &    \cdots    &   0    &   \lambda_N        
\end{pmatrix}.
$$
Moreover  since $(1,\ldots,1) \in \ker A$,  we can choose all the coordinates of the first column of $P$ and thus the first line of $P^{-1}=P^T$ are equal to $1$. We denote the first column of $P^{-1}$ by
$$
B_1=\begin{pmatrix} 1 \\ \alpha_2 \\ \vdots \\ \alpha_N \end{pmatrix}.
$$
Notice that $B_1=P^{-1}B$.
Denoting the Kalman matrix of the couple $(A,B)$ by
$$
K(A,B) = (B,AB,\ldots,A^{N-1}B)
$$
one has
$$
K(P^{-1}AP,P^{-1}B) = P^{-1} K(A,B)
$$
and hence it suffices to investigate the Kalman condition on the couple of matrices $(D,B_1)$. Now, there holds
$$
K(D,B_1) = \begin{pmatrix}
1 & 0 &0  &\cdots & 0 \\
\alpha_2 & \lambda_2 \alpha_2 & \lambda_2^2\alpha_2 & \cdots & \lambda_2^{N-1}\alpha_2 \\
\vdots & \vdots &\vdots  &\vdots & \vdots \\
\alpha_N & \lambda_N \alpha_N & \lambda_N^2\alpha_N & \cdots & \lambda_N^{N-1}\alpha_N 
\end{pmatrix}.
$$
This matrix is invertible if and only if all eigenvalues $0, \lambda_2,\ldots,\lambda_N$ are pairwise distinct, and all coefficients $\alpha_2,\ldots,\alpha_N$ are nonzero.
It is clear that these conditions can be translated as algebraic conditions on the coefficients of the matrix $A$.

Hence, for almost every $\tilde x \in \RN$ and for every $\tilde v \in \Vf$, the Kalman condition holds at $(\tilde x,\tilde v)$. For such a point, this ensures that the linearized system at the equilibrium point $(\tilde x,\tilde v)$ is controllable (in any time $T$). Now, using a classical implicit function argument applied to the end-point mapping (see e.g. \cite{T_2005}), we infer the desired local controllability property in a neighborhood of $(\tilde x,\tilde v)$. By construction, the controls are componentwise and time sparse.
To prove the more precise statement of Remark \ref{remm8}, it suffices to invoke the chain of arguments developed in \cite[Lemma 2.1]{ST_TAC2010} and \cite[Section 2.1.3]{HT_SICON2011}, combining classical needle-like variations with a conic implicit function theorem, leading to the fact that the controls realizing local controllability can be chosen as a perturbation of the zero control with a finite number of needle-like variations.

\end{proof}

\begin{remark}
Actually the set of points $x \in \RN$ for which the condition is not satisfied can be expressed as an algebraic manifold in the variables $a(\Vert x_i-x_j\Vert)$. For example,
if $x$ is such that all mutual distances $\Vert x_i-x_j\Vert$ are equal, then it can be seen from the proof of this proposition that the Kalman condition does not hold, hence the linearized system around the corresponding consensus point is not controllable.
\end{remark}

\begin{remark}\label{remm8}
The controls realizing this local controllability can be even chosen to be piecewise constant, with a support union of a finite number of intervals.
\end{remark}

As a consequence of this local controllability result, we infer that we can steer the system from any consensus point to almost any other one by acting only on one agent. This is a partial but global controllability result, whose proof follows the strategy 
developed in \cite{coron-trelat04, coron-trelat06} for controlling heat and wave equations on steady-states. 

\begin{theorem}\label{thm:local}
For  every 
$(\tilde x_0,\tilde v_0) \in \RN \times \Vf$, for almost every
$(\tilde x_1,\tilde v_1) \in \RN \times \Vf$, for every $\delta >0$, and for every $i=1,\ldots,N$ 
there exist  $T>0$ and a control $u: [0,T] \to [0,\delta]^{d}$ steering the system from $(\bar x,\bar v)$ to $(\tilde x,\tilde v)$, with the property $u_{j}(t) =0$ for every $j\neq i$ and every $t \in [0,T]$.
\end{theorem}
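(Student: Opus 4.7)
The strategy is a quasi-static deformation along a curve in the consensus manifold, following the approach of \cite{coron-trelat04,coron-trelat06}: one connects $(\tilde x_0,\tilde v_0)$ to $(\tilde x_1,\tilde v_1)$ by a path in $\RN\times\Vf$ and tracks it by concatenating many short applications of the local controllability of Proposition~\ref{prop:local}, the short step length guaranteeing that each local control sits inside the prescribed $\delta$-box and acts only on the $i$-th agent.

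First, I would join source and target by the straight segment
$\gamma(s)=\bigl((1-s)\tilde x_0 + s\tilde x_1,\,(1-s)\tilde v_0 + s\tilde v_1\bigr)$, $s\in[0,1]$,
which lies in the affine subspace $\RN\times\Vf$. By Proposition~\ref{prop:local} and the remark following it, the set $S\subset\RN$ of positions for which the Kalman criterion fails is an algebraic subvariety of codimension at least one, hence of Lebesgue measure zero. For almost every target $(\tilde x_1,\tilde v_1)$, the segment $\gamma$ therefore meets $S\times\Vf$ in only finitely many values of $s$; around each such bad parameter one detours through the open dense complement of $S\times\Vf$, producing a piecewise smooth curve $\tilde\gamma\colon[0,1]\to\RN\times\Vf$ lying in the good set except possibly at $s=0$. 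If $(\tilde x_0,\tilde v_0)$ itself sits in the bad locus, a preliminary free drift (when $\bar v_0\neq 0$, using that $\tilde x_0 + t\bar v_0\notin S$ for almost every $t$) or a single small local-control step carries the system in arbitrarily short time onto a generic consensus point, from which the path above starts.

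Next, by compactness of $\tilde\gamma([0,1])$ and openness of the neighborhoods $W$ furnished by Proposition~\ref{prop:local}, I extract a finite partition $0=s_0<s_1<\cdots<s_K=1$ such that, for every $k$, both $\tilde\gamma(s_{k-1})$ and $\tilde\gamma(s_k)$ lie in a common such neighborhood. Proposition~\ref{prop:local} together with Remark~\ref{remm8} then produces, on each subinterval, a piecewise constant control supported only on the $i$-th agent that steers the system from $\tilde\gamma(s_{k-1})$ to $\tilde\gamma(s_k)$ in time $T/K$. Concatenating the $K$ pieces yields an admissible control $u\colon[0,T]\to\R^d$ with $u_j\equiv 0$ for $j\neq i$ realising the prescribed transfer.

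The point I expect to require the most care is the quantitative size of each local control. Since Proposition~\ref{prop:local} is obtained by an implicit-function (or conic implicit-function, as in \cite[Lemma 2.1]{ST_TAC2010} and \cite[Section 2.1.3]{HT_SICON2011}) argument applied to the end-point mapping at zero control, a displacement of size $\varepsilon$ along $\tilde\gamma$ is realised by a control of $L^\infty$-norm $O(\varepsilon)$ depending continuously on the base point. Taking $K$ large enough that each hop is of size $O(1/K)$ and using uniformity along the compact set $\tilde\gamma([0,1])$, the local controls fit inside the box $[0,\delta]^d$ (modulo the sign adjustments that the one-component freedom allows). This uniform quantitative bound along the compact path, together with the generic-position perturbation of $\gamma$ around the algebraic subvariety $S$, is the real heart of the argument; the remainder is the standard compactness and chaining bookkeeping.
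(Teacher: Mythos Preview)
Your proposal is correct and follows essentially the same quasi-static route as the paper: connect the two consensus points by a path in $\RN\times\Vf$, cover its compact image by finitely many local-controllability neighborhoods from Proposition~\ref{prop:local}, and concatenate the resulting $i$-th-component controls. The paper's own proof is considerably terser and invokes only connectedness of $\RN\times\Vf$, compactness of the path, and iterative application of Proposition~\ref{prop:local}, with the ``almost every'' restriction attributed solely to the need for Kalman controllability at the endpoint; your argument is more explicit in two respects---the detour of the path around the codimension-$\geq 1$ bad set $S$ and the quantitative $O(1/K)$ scaling of the local controls to fit the $\delta$-box---both of which the paper leaves implicit.
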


\begin{proof}
Since the manifold of consensus points $\RN \times \Vf$ is connected, it follows that, for all consensus points $(\tilde x_0,\tilde v_0)$ and $(\tilde x_1,\tilde v_1)$, there exists a $C^1$ path of consensus points $(\tilde x_\tau,\tilde v_\tau)$ joining $(\tilde x_0,\tilde v_0)$ and $(\tilde x_1,\tilde v_1)$, and parametrized by $\tau\in[0,1]$.
Then the we apply iteratively the local controllability result of Proposition~\ref{prop:local}, on a series of neighborhoods covering this path of consensus points (his can be achieved by compactness). At the end, to reach exactly the final consensus point $(\tilde x_1,\tilde v_1)$, it is required that the linearized control system at $(\tilde x_1,\tilde v_1)$ be controllable, whence the ``almost every'' statement.
\end{proof}

Note that on the one hand the control $u$ can be of arbitrarily small amplitude, on the other hand the controllability time $T$ can be large.

Now, it follows from the results of the previous section that we can steer any initial condition $(x_0,v_0)\in\RN\times\RN$ to the consensus region defined by \eqref{eq:CScondition}, by means of a componentwise and time sparse control. Once the trajectory has entered this region, the system converges naturally (i.e., without any action: $u=0$) to some point of the consensus manifold $\RN\times\Vf$, in infinite time. This means that, for some time large enough, the trajectory enters the neighborhood of controllability whose existence is claimed in Proposition~\ref{prop:local}, and hence can be steered to the consensus manifold within finite time. Theorem~\ref{thm:local} ensures the existence of a control able move the system on the consensus manifold in order to reach almost any other desired consensus point. Hence we have obtained the following corollary.

\begin{corollary}\label{cor:lol}
For every $M>0$, for  every initial condition $(x_{0},v_{0}) \in \RN\times \RN$, for almost every $(x_1,v_1) \in \RN \times \Vf $, there exist $T >0 $ and a componentwise and time sparse control $u:[0,T] \to \RN$, satisfying \eqref{const_cont}, such that the corresponding solution starting at $(x_{0},v_{0})$ arrives at the consensus point $(x_1,v_1)$ within time $T$.
\end{corollary}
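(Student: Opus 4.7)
The plan is to concatenate three phases: (1) a sparse feedback phase that drives the initial data into the consensus region; (2) a free (zero control) phase during which the uncontrolled system approaches the consensus manifold; (3) a final controlled phase that uses the local controllability near the consensus manifold to land exactly on $(x_1,v_1)$.

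For the first phase, I would apply Theorem \ref{prop:piecewise} with the feedback $u^\circ$ of Definition \ref{def:u} and a sampling time $\tau \in (0,\tau_0]$ satisfying the bound of Remark \ref{rem:maxtime}. That result yields a sampling solution starting at $(x_0,v_0)$ which enters the consensus region defined by \eqref{eq:CScondition} within a time $T_1 \leq \frac{2N}{M}\bigl(\sqrt{V(0)} - \gamma(\bar X)\bigr)$, using a control which is componentwise sparse (at most one active component at each sampling instant) and time sparse (piecewise constant with finitely many switches), while satisfying the admissibility constraint \eqref{const_cont}. Call the reached state $(x(T_1),v(T_1))$.

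For the second phase, I would set $u \equiv 0$ on an interval $[T_1, T_1+T_2]$. Since $(x(T_1),v(T_1))$ lies in the consensus region, Proposition \ref{prop:condition-for-consensus} and Lemma \ref{lem:app1} imply that the free trajectory converges as $t \to +\infty$ to some consensus point $(x^\infty,v^\infty) \in \RN \times \Vf$. Choosing $T_2$ large enough, the state $(x(T_1+T_2),v(T_1+T_2))$ lies in an arbitrarily small neighborhood of $(x^\infty,v^\infty)$. Provided $x^\infty$ belongs to the full-measure subset of $\RN$ on which Proposition \ref{prop:local} furnishes local controllability (if by chance it does not, a slight extension of the feedback phase or an arbitrarily small single-component perturbation puts us in that full-measure set), I would then apply the path-covering argument of Theorem \ref{thm:local}: for almost every target $(x_1,v_1)\in \RN \times \Vf$, a $C^1$ path of consensus points joins $(x^\infty,v^\infty)$ to $(x_1,v_1)$, it is covered by finitely many neighborhoods on each of which Proposition \ref{prop:local} gives a piecewise constant, single-component, arbitrarily small control, and the iterative concatenation of these local controls drives the state to $(x_1,v_1)$ in some finite time $T_3$. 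Because each local control can be chosen of $\ell_1^N$-$\ell_2^d$-norm below any prescribed threshold (in particular below $M$), the admissibility bound \eqref{const_cont} is preserved.

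Setting $T := T_1 + T_2 + T_3$ and concatenating the three pieces of control yields the desired $u:[0,T]\to \RN$. Componentwise sparsity survives concatenation, since each of the three pieces has at most one active component at each time (the middle piece being identically zero), and time sparsity survives as well, each piece being piecewise constant with finitely many switching times. The main obstacle in this scheme is precisely the passage between phases two and three: we cannot prescribe the limit consensus point $(x^\infty,v^\infty)$ reached by free evolution, so one must ensure that either $(x^\infty,v^\infty)$ itself lies in the set of consensus points from which Theorem \ref{thm:local} is applicable, or that an arbitrarily small corrective sparse control applied before phase three brings the state into such a point; the fact that the exceptional set where the Kalman condition fails is an algebraic subvariety of $\RN$ (cf.\ the proof of Proposition \ref{prop:local}) is what makes this correction generically harmless and accounts for the ``almost every $(x_1,v_1)$'' in the statement.
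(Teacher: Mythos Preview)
Your proposal is correct and follows essentially the same three-phase scheme as the paper: sparse feedback to the consensus region (Theorem~\ref{prop:piecewise}), free evolution toward a consensus point (Proposition~\ref{prop:condition-for-consensus}), and then local controllability near the consensus manifold combined with the path-covering argument of Theorem~\ref{thm:local} to reach the target. You are in fact slightly more careful than the paper in flagging the issue that the intermediate limit point $(x^\infty,v^\infty)$ might fall in the null set where Proposition~\ref{prop:local} fails, and in indicating how a small sparse perturbation fixes this.
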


\section{Sparse Optimal Control of the Cucker--Smale Model}\label{sec:optimal}
In this section we investigate the sparsity properties of a  \textit{finite time optimal control} with respect to a cost functional involving the discrepancy of the state variables to consensus and a $\ell_1^N-\ell_2^d$-norm term of the control.

While the {\it greedy strategies} based on instantaneous feedback as presented in Section \ref{sec:stable} models the perhaps more realistic situation where the policy maker is not allowed to make future predictions, the optimal control problem presented in this section actually describes a model where the policy maker is allowed to see how the dynamics can develop. 
Although the results of this section do not lead systematically to sparsity, it is interesting to note that the {\it lacunarity of sparsity} of the optimal control is actually encoded in terms of the codimension of certain manifolds, which have actually null Lebesgue measure in the space of cotangent vectors. \\

We consider the optimal control problem of determining a trajectory solution of \eqref{control}, starting at $(x(0),v(0)) = (x_{0},v_{0}) \in \RN\times\RN$, and minimizing a cost functional which is a combination of the distance from consensus with the $\ell_{1}^N-\ell_2^d$-norm of the control (as in \cite{elra10,fora08}), under the control constraint \eqref{const_cont}. More precisely, the cost functional considered here is, for a given $\gamma>0$,
\begin{equation}\label{cost}
\int_0^T \bigg( \sum_{i=1}^N  \Big( v_i(t) - \frac{1}{N}\sum_{j=1}^N v_j(t) \Big)^2 + \gamma \sum_{i=1}^N\Vert u_i(t)\Vert \bigg) dt.
\end{equation}
Using classical results in optimal control theory (see for instance~\cite[Theorem~5.2.1]{BressanPiccoli} or \cite{Cesari,T_2005}), this optimal control problem has a unique optimal solution $(x(\cdot),v(\cdot))$, associated with a control $u$ on $[0,T]$, which is characterized as follows. According to the Pontryagin Minimum Principle (see \cite{Pontryagin}), there exist absolutely continuous functions $p_x(\cdot)$ and $p_v(\cdot)$ (called adjoint vectors), defined on $[0,T]$ and taking their values in $\RN$, satisfying the adjoint equations
\begin{equation}\label{eq:covector}
\left\{
\begin{split}
\dot p_{x_{i}}  & = \frac{1}{N}
 \sum_{j=1}^{N}  \frac{a(\|x_{j}-x_{i}\|)}{\|x_{j}-x_{i}\|} 
 \langle x_{j}-x_{i},v_{j} - v_{i} \rangle (p_{v_{j}} - p_{v_{i}})  , \\
\dot p_{v_{i}} &= -p_{x_{i}}  -  \frac{1}{N}  \sum_{j\neq i} a(\|x_{j} - x_{i}\|)(p_{v_{j}}- p_{v_{i}})- 2  v_{i} + \frac{2}{N}
\sum_{j=1}^{N} v_{j} ,
\end{split}
\right.
\end{equation}
almost everywhere on $[0,T]$, and $p_{x_i}(T)=p_{v_i}(T)=0$, for every $i=1,\ldots, N$. Moreover, for almost every $t\in[0,T]$ the optimal control $u(t)$ must minimize the quantity
\begin{equation}\label{eq:min}
\sum_{i=1}^N \langle p_{v_i}(t),w_i\rangle +  \gamma \sum_{i=1}^N\Vert w_i\Vert,
\end{equation}
over all possible $w=(w_1,\ldots,w_N)\in\RN$ satisfying $\sum_{i=1}^N\Vert w_i\Vert\leq M$.

In analogy with the analysis in Section~\ref{sec:stable} we identify five regions $\mathcal{O}_{1}, \mathcal{O}_{2}, \mathcal{O}_{3}, \mathcal{O}_{4}, \mathcal{O}_{5}$ covering the (cotangent) space $\RN\times\RN\times\RN\times\RN$:
\begin{itemize}
\item[$\mathcal{O}_{1}=$] $\{ (x,v,p_x,p_v)\ \vert\   \|p_{v_{i}}\| <  \gamma$ for every $i\in\{1,\ldots,N\}\}$,
\item[$\mathcal{O}_{2}=$] $\{ (x,v,p_x,p_v)\ \vert\  $ there exists a unique $i \in \{1,\ldots,N\}$ such that $  \|p_{v_{i}}\|  =  \gamma$  and $\|p_{v_{j}}\| <  \gamma$ for every $j \neq i \}$,
\item[$\mathcal{O}_{3}=$]
 $\{ (x,v,p_x,p_v)\ \vert\  $ there exists a unique $i \in \{1,\ldots,N\}$ such that $  \|p_{v_{i}}\| >  \gamma$ and $\|p_{v_{i}}\| >\|p_{v_{j}}\|$ for every $j \neq i \}$,
\item[$\mathcal{O}_{4}=$] 
$\{ (x,v,p_x,p_v)\ \vert\  $ there exist $k\geq 2$ and $i_{1},\ldots,i_{k} \in \{1,\ldots, N\}$  such that  $  \|p_{v_{i_{1}}}\|  = \|p_{v_{i_{2}}}\| = \cdots = \|p_{v_{i_{k}}}\| >  \gamma$  and $\|p_{v_{i_{1}}}\| > \|p_{v_{j}}\|$ for every $j \notin\{ i_{1},\ldots,i_{k}\}\}$,
\item[$\mathcal{O}_{5}=$] $\{ (x,v,p_x,p_v)\ \vert\  $ there exist $k\geq 2$ and  $i_{1},\ldots,i_{k} \in \{1,\ldots, N\}$ such that  $  \|p_{v_{i_{1}}}\|  = \|p_{v_{i_{2}}}\| = \cdots = \|p_{v_{i_{k}}}\|  =  \gamma$  and $\|p_{v_{j}}\| <  \gamma$ for every $j \notin\{ i_{1},\ldots,i_{k}\}\}$.
\end{itemize}
The subsets $\mathcal{O}_{1}$ and $\mathcal{O}_{3}$ are open, the submanifold $\mathcal{O}_{2}$ is closed (and of zero Lebesgue measure) and $\mathcal{O}_{1}\cup \mathcal{O}_{2}\cup \mathcal{O}_{3}$ is of full Lebesgue measure in $\RN\times \RN$. Moreover if an extremal $(x(\cdot),v(\cdot),p_x(\cdot),p_v(\cdot))$ solution of \eqref{control}-\eqref{eq:covector} is in $\mathcal{O}_{1}\cup \mathcal{O}_{3}$ along an open interval of time then the control is uniquely determined from \eqref{eq:min} and is componentwise sparse.
Indeed, if there exists an interval $I \subset [0,T]$ such that $(x(t),v(t),p_{x}(t),p_{v}(t)) \in \mathcal{O}_{1}$ for every $t \in I$,  then \eqref{eq:min} yields $u(t) = 0$ for almost every $t\in I$.
If $(x(t),v(t),p_{x}(t),p_{v}(t)) \in \mathcal{O}_{3}$ for every $t \in I$ then 
\eqref{eq:min} yields $u_{j}(t) =0$ for every $j \neq i$ and $u_{i}(t) = - M \frac{p_{v_{i}}(t)}{\|p_{v_{i}}(t)\|}$ for almost every $t\in I$. Finally, if $(x(t),v(t),p_{x}(t),p_{v}(t)) \in \mathcal{O}_{2}$ for every $t \in I$, then \eqref{eq:min} does not determine $u(t)$ in a unique way: it yields that $u_{j}(t) =0$ for every $j \neq i$ and $u_i(t)= - \alpha \frac{p_{v_{i}}(t)}{\|p_{v_{i}}(t)\|}$ with $0\leq \alpha\leq M$, for almost every $t\in I$. However $u$ is still componentwise sparse on $I$.
\\
The submanifolds $\mathcal{O}_{4}$ and $\mathcal{O}_{5}$ are of zero Lebesgue measure. When the extremal is in these regions, the control is not uniquely determined from \eqref{eq:min} and is not necessarily componentwise sparse. More precisely, if $(x(t),v(t),p_{x}(t),p_{v}(t)) \in \mathcal{O}_{4}\cup \mathcal{O}_{5}$ for every $t \in I$, then  \eqref{eq:min} is satisfied by every control of the form  $u_{i_{j}}(t) = - \alpha_{j}  \frac{ p_{v_{i_{j}}}(t)}{\|p_{v_{i_{j}}}(t)\|}$, $j=1,\ldots,k$, and $u_{l} =0$ for every $l \notin\{ i_{1},\ldots,i_{k}\}$, where the $\alpha_i$'s are nonnegative real numbers such that $0\leq \sum_{j=1}^k \alpha_{j} \leq M$ whenever $(x(t),v(t),p_{x}(t),p_{v}(t)) \in \mathcal{O}_{5}$, and such that $\sum_{j=1}^k \alpha_{j} = M$ whenever $(x(t),v(t),p_{x}(t),p_{v}(t)) \in \mathcal{O}_{4}$.
We have even the following more precise result.

\begin{proposition}\label{lem:O2O4}
The submanifolds $\mathcal{O}_{4}$ and $\mathcal{O}_{5}$ are stratified\footnote{in the sense of Whitney, see e.g. \cite{Goresky}.} manifolds of codimension larger than or equal to two. More precisely, $\mathcal{O}_{4}$ (resp., $\mathcal{O}_{5}$) is the union of submanifolds of codimension $2(k-1)$ (resp., $2k$), where $k$ is the index appearing in the definition of these subsets and it is as well the number of active components of the control at the same time.
\end{proposition}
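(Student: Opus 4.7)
The plan is to decompose $\mathcal{O}_4$ and $\mathcal{O}_5$ as finite unions of locally closed strata indexed by the choice $I=\{i_1,\dots,i_k\}\subset\{1,\dots,N\}$ of active indices at which the norms $\|p_{v_{i_j}}\|$ attain their common value. Within the open subset of the cotangent space $\RN\times\RN\times\RN\times\RN$ defined by the strict inequalities $\|p_{v_\ell}\| < \|p_{v_{i_1}}\|$ for $\ell\notin I$ (and additionally $\|p_{v_{i_1}}\|>\gamma$ for the $\mathcal{O}_4$ case), the stratum $\mathcal{O}_4^I$ is cut out by the $k-1$ smooth scalar equations $\|p_{v_{i_j}}\|^2 = \|p_{v_{i_1}}\|^2$ for $j=2,\dots,k$, while $\mathcal{O}_5^I$ is cut out by the $k$ equations $\|p_{v_{i_j}}\|^2 = \gamma^2$ for $j=1,\dots,k$. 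Since each of these functions depends on a different block $p_{v_{i_j}}$ of coordinates and its gradient equals $2p_{v_{i_j}}\neq 0$ on the stratum, the gradients are automatically linearly independent, and the implicit function theorem produces smooth submanifolds of \emph{static} codimensions $k-1$ and $k$ respectively.

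The doubling to $2(k-1)$ and $2k$ claimed in the proposition arises from the singular-arc nature of these strata. Precisely, in $\mathcal{O}_4$ and $\mathcal{O}_5$ the minimization condition \eqref{eq:min} leaves the control indeterminate, so for a genuine extremal of the Pontryagin system \eqref{control}--\eqref{eq:covector} to persist on the stratum throughout a nontrivial time interval, each defining equation must hold identically, forcing its time derivative along the Hamiltonian flow to vanish as well. A direct substitution of the adjoint equations yields
\[
\frac{d}{dt}\|p_{v_i}\|^2 = 2\langle p_{v_i},\dot p_{v_i}\rangle = -2\langle p_{v_i},p_{x_i}\rangle + R_i(x,v,p_v),
\]
where $R_i$ is independent of the adjoint $p_x$. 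Consequently the gradient of each derivative equation with respect to $p_{x_i}$ is the nonzero vector $-2p_{v_i}$, whereas the static equations depend only on $p_v$; the $k-1$ (resp.\ $k$) derivative constraints are therefore linearly independent from the static constraints, yielding the claimed codimension $2(k-1)$ (resp.\ $2k$) via a second application of the implicit function theorem.

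To conclude the Whitney stratification structure, one verifies that $\overline{\mathcal{O}_4^I}\subset\bigcup_{I'\supsetneq I}\mathcal{O}_4^{I'}$ (and analogously for $\mathcal{O}_5$), so that higher-$k$ strata appear as boundaries of lower-$k$ ones, and that Whitney's regularity conditions (a) and (b) hold across adjacent strata; the overall codimension bound of $2$ follows from the minimal case $k=2$. I expect the main obstacle to be handling the degenerate loci at which further coincidences occur among the vectors $p_{v_{i_j}}$ themselves (not merely their norms), where the gradients of the static and derivative constraints may become dependent and a finer substratification is required; carrying out this case analysis explicitly and verifying Whitney regularity at the boundaries between adjacent strata is the technical core of the argument.
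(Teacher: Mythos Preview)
Your proposal is correct and follows essentially the same approach as the paper: impose the $k-1$ (resp.\ $k$) norm-equality constraints defining the stratum, then differentiate each along the adjoint flow \eqref{eq:covector} to obtain a second, independent family of constraints involving $p_x$, yielding the doubled codimension. The paper treats only the case $k=2$ of $\mathcal{O}_4$ explicitly, writes out the derivative relation, asserts independence without further comment, and then states that the general case follows analogously; your write-up is in fact more careful in justifying independence (static constraints depend only on $p_v$, derivative constraints bring in $p_{x_i}$ with nonzero coefficient $-2p_{v_i}$) and in flagging the Whitney boundary and degeneracy issues, which the paper does not address at all.
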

\begin{proof}
Since the arguments are similar for $\mathcal{O}_{4}$ and $\mathcal{O}_{5}$,  we only treat in details the case of $\mathcal{O}_{4}$. Assume that $\|p_{v_{1}}(t)\|  = \|p_{v_{2}} (t)\| >  \gamma$, and that $\|p_{v_{j}}(t)\| < \|p_{v_{1}}(t)\|$ for every $j=3,\ldots,N$ and for every $t \in I$. 
Differentiating with respect to $t$ the equality  
\begin{equation}\label{eq:derivation0}
\|p_{v_{1}}(t)\|^{2}  = \|p_{v_{2}} (t)\|^{2},
\end{equation} 
we obtain
\begin{align}
 \langle p_{v_{2}}, p_{x_{2}}\rangle - \langle p_{v_{1}}, p_{x_{1}}\rangle &+ \frac{1}{N} \sum_{j=3}^{N}
 \langle p_{v_{j}}, 
  a(\|x_{j}-x_{2}\|)
 p_{v_{2}} - 
 a(\|x_{j}-x_{1}\|)
  p_{v_{1}} \rangle + 
 \nonumber \\
 &\qquad+ \frac{1}{N} \|p_{v_{1}}\|^{2} \sum_{j=3}^{N}\left( a(\|x_{j}-x_{1}\|) - a(\|x_{j}-x_{2}\|)\right) +\nonumber \\
&\qquad+ 2  ( \langle p_{v_{2}}, v_{2}\rangle - \langle p_{v_{1}}, v_{1}\rangle)
+ \frac{2}{N} \langle p_{v_{1}} - p_{v_{2}}, \sum_{j=1}^{N}v_{j} \rangle =0.\label{eq:derivation1}
\end{align}
These two relations are clearly independent in the cotangent space.
Since a vector must satisfy \eqref{eq:derivation0} and \eqref{eq:derivation1}, this means that the 
$\mathcal{O}_{4}$ is a submanifold of the cotangent space 
$\R^{4dN}$ of codimension $2$.
Assume now that $\|p_{v_{1}}(t)\|  = \|p_{v_{2}} (t)\|= \cdots = \|p_{v_{k}} (t)\|$, $\|p_{v_{1}}(t)\|  >  \gamma$, $\|p_{v_{j}}(t)\| < \|p_{v_{1}}(t)\| $ for $j = k+1, \ldots, N$, for every $t \in I$.
Then for every  pair $(p_{v_1},p_{v_j})$ $j=2, \ldots, k $
 we have a relation of the kind~\eqref{eq:derivation0} and a relation of the kind \eqref{eq:derivation1}.  Hence $\mathcal{O}_{4}$ has codimension  $2(k-1)$. It follows clearly that $\mathcal{O}_{4}$ is a stratified manifold, whose strata are submanifolds of codimension $2(k-1)$.
\end{proof}

It follows from these results that the componentwise sparsity features of the optimal control are coded in terms of the codimension of the above submanifolds.
By the way, note that, since $p_x(T)=p_v(T)=0$, there exists $\varepsilon>0$ such that $u(t)=0$ for every $t\in[T-\varepsilon,T]$. In other words, at the end of the interval of time the extremal $(x(\cdot),v(\cdot),p_x(\cdot),p_v(\cdot))$ is in $\mathcal{O}_1$.

It is an open question of knowing whether the extremal may lie on the submanifolds $\mathcal{O}_4$ or $\mathcal{O}_5$ along a nontrivial interval of time. What can be obviously said is that, for generic initial conditions $((x_{0},v_{0}),(p_x(0),p_v(0)))$, the optimal extremal does not stay in $\mathcal{O}_{4}\cup\mathcal{O}_{5}$ along an open interval of time; such a statement is however unmeaningful since the pair $(p_x(0),p_v(0))$ of initial adjoint vectors is not arbitrary and is determined through the shooting method by the final conditions $p_x(T)=p_v(T)=0$.

\section{Conclusions and Future Directions}\label{sec:ext}

In this paper we provided sparse feedback control strategies for inducing  alignment consensus in a group of agents driven by a Cucker--Smale type dynamics.  We clarified how these natural controls
stem from variational principles involving $\ell_1$-norm penalization terms.
 Not only we showed that sparse control is economical in terms of number of interactions of the external controller/policy maker with the group of agents, but we also proved its optimality with respect to a very large class of 
possible (also distributed) controls, in the sense of  instantaneously providing the  largest decrease of a Lyapunov functional measuring distance from consensus. This remarkable
property has never been highlighted in our studies. Building upon these preliminary results we have been able to clarify the global controllability of these systems, and we investigated also 
the sparsity of finite horizon optimal control subjected to $\ell_1$-norm penalization terms.
 \\

 Let us now give a  glimpse to some of the developments of this work.
Our approach extends to other model of social dynamics. Indeed, on one side
the specific form of the Cucker--Smale model \eqref{system} plays a significant role in the definition of the consensus region as motivated after Proposition \ref{prop:condition-for-consensus}. However, on the other side, it is its graph-Laplacian structure 
\begin{equation}\label{eq:procontr}
\left\{ \begin{split}
\dot x &= v \\
\dot v &= - L_{x} v,
\end{split}\right.
\end{equation} 
where $L_{x}$ is the Laplacian defined in Section \ref{sec:CSmodel}, which is responsible for the controllability of the system. In fact, the nonnegativity of $L_x$ with respect to the bilinear form $B(\cdot,\cdot)$ is a key ingredient which 
allows us in Proposition \ref{prop:stable}, Theorem \ref{thm:main}, and Theorem \ref{prop:piecewise} (here also the boundedness of the map $x \to L_x$ plays a role) to show convergence of the controlled system \eqref{control} towards the consensus region. In addition, for the proof of Theorem \ref{thm:main2} we just need
the continuity and the uniform boundedness of the map $x \to L_x$. Also the results of controllability, in particular the proof of Proposition \ref{prop:local} and its corollaries Theorem \ref{thm:local} and Corollary \ref{cor:lol}, depends exclusively
on the graph-Laplacian structure of the dynamics, see formula \eqref{eq:lapcontr}.  We conclude that the results mentioned  above can be easily adapted to dynamical systems of the type \eqref{eq:procontr}, where $L_x$ is a Laplacian matrix boundedly and continuously depending on the main state parameter $x$.\\
Let us however stress that our analysis has more far reaching potential, as it can address also situations which do not match the structure \eqref{eq:procontr}, such as the Cucker and Dong model of cohesion and avoidance \cite{CuckerDong11}, where the system has actually the form
\begin{equation}\label{eq:procontr2}
\left\{ \begin{split}
\dot x &= v \\
\dot v &= - (L_x^c - L_{x}^a) x,
\end{split}\right.
\end{equation} 
where $L_x^a$ and $L_x^c$ are graph-Laplacians associated to avoidance and cohesion forces respectively. In the recent work \cite{bofofrha13} the strategy proposed within the present paper has been generalized to the  Cucker and Dong mode, showing 
controllability, conditional to the initial conditions.\\

A number of further interesting research directions stems out from the present work, and we limit ourself in the following list to the mention of ongoing work in progress.
The latter include the following:
\begin{itemize}
\item[-] It is natural to address the mean-field limit of social dynamics models (see \cite{CCH13} for a recent survey
for uncontrolled systems) towards {\it sparse} control, connecting our work with the by now very broad literature of sparse
optimal control of partial differential equations \cite{caclku12,clku11,clku12,hestwa12,pive12,st09,wawa11}.
In particular we shall study infinite dimensional optimal control problems of a partial differential equation
of Vlasov-type, prescribing the dynamics of the probability distribution of interacting agents. A first step in this direction is achieved in the paper \cite{FS13}.
\item[-] In the non-flocking region the  Cucker-Dong system is expected 
to evolve into
a collection of clusters, each reaching consensus, see \cite{MT13} for a recent survey on heterophilious consensus. The problem of
controlling the number of clusters maybe interesting for a number of economic models.
\item[-] In socio-physics and opinion formation first order models (Krause type)
are often used. This would correspond to a dynamics with fixed positions for
the Cucker--Smale system. A natural question is how to extend our approach to such a case.
\item[-] Other investigations which are of interest to applications include:
sparse controls which are optimal from complexity point of view (see Section \ref{sec:complexity}),
observability of Cucker--Smale system, social dynamics systems with noise.
\end{itemize}

\section*{Acknowledgement}
Marco Caponigro acknowledges the support and the hospitality of the Department of Mathematics and the Center for Computational and Integrative Biology (CCIB) of Rutgers University during the preparation
of this work. 
Massimo Fornasier acknowledges the support of the ERC-Starting Grant ``High-Dimensional Sparse Optimal Control'' (HDSPCONTR - 306274). The authors acknowledge for the support the  NSFgrant \#1107444 (KI-Net).

\section{Appendix}\label{sec_appendix}

\subsection{Proof of Lemma~\ref{lem:app1}}\label{appendix_lem:app1}
For every $t \geq 0$, one has
\begin{align*}
\frac{d}{dt} \frac{1}{N} \sum_{i=1}^{N}\|v_{\perp_{i}}\|^{2} & = \frac{2}{N}  \sum_{i=1}^{N} \langle \dot v_{\perp_{i}}, v_{\perp_{i}} \rangle =  \frac{2}{N}  \sum_{i=1}^{N} \langle \dot v_{{i}}, v_{\perp_{i}} \rangle = \frac{2}{N^{2}} \sum_{i=1}^{N}  \sum_{j=1}^{N} a(\|x_{i} - x_{j}\|) \langle v_{j}-v_{i},v_{\perp_{i}} \rangle \\
&= 
\frac{1}{N^{2}}\left( \sum_{i=1}^{N}  \sum_{j=1}^{N} a(\|x_{i} - x_{j}\|) \langle v_{j}-v_{i},v_{\perp_{i}} \rangle
+
 \sum_{j=1}^{N}  \sum_{i=1}^{N} a(\|x_{j} - x_{i}\|) \langle v_{i}-v_{j},v_{\perp_{j}} \rangle
\right)\\
& = - \frac{1}{N^{2}} \sum_{i=1}^{N}  \sum_{j=1}^{N} a(\|x_{i} - x_{j}\|) \langle v_{i}-v_{j},v_{\perp_{i}} - v_{\perp_{j}} \rangle \\
& = - \frac{1}{N^{2}} \sum_{i=1}^{N}  \sum_{j=1}^{N} a(\|x_{i} - x_{j}\|) \|v_{i} - v_{j}\|^{2}.
\end{align*}
Now
 $$
\|x_{i} - x_{j}\| = \|x_{\perp_{i}} - x_{\perp_{j}}\|\leq \|x_{\perp_{i}} \| + \|x_{\perp_{j}} \| \leq \sqrt{2}\left(\sum_{i=1}^{N} \|x_{\perp_{i}}\|^{2}\right)^{\frac{1}{2}} = \sqrt{2NX}
$$
and since  $a$ is nonincreasing we have the statement.

\subsection{Proof of Proposition~\ref{prop:condition-for-consensus}}
\label{appendix_prop:condition-for-consensus}
We split the proof of Proposition~\ref{prop:condition-for-consensus} in several steps.
\begin{lemma}\label{lem:app2}
Assume that $V(0) \neq 0$, then for every $t\geq 0$
$$
\frac{d}{dt} \sqrt{V(t)}\leq -a \left(\sqrt{2NX(t)}\right) \sqrt{V(t)}.
$$
\end{lemma}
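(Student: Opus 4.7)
The plan is to derive this bound as a direct consequence of Lemma~\ref{lem:app1} together with the chain rule $\frac{d}{dt}\sqrt{V(t)} = \dot V(t)/(2\sqrt{V(t)})$, which is valid precisely when $V(t)>0$. So the one substantive preliminary issue is to justify that, under the standing hypothesis $V(0)\neq 0$, one has $V(t)>0$ for every $t\geq 0$, so that $\sqrt{V(\cdot)}$ is differentiable everywhere and the division makes sense.

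For the positivity step I would use Remark~\ref{rem:1}, which says that the consensus manifold $\RN\times\Vf$ is invariant for \eqref{system} and that consensus cannot be reached in finite time unless the initial datum is already on the manifold. Concretely, if there were some $t_0>0$ with $V(t_0)=0$, then $v(t_0)\in\Vf$, so $(x(t_0),v(t_0))$ is a consensus point; by backward uniqueness for the smooth ODE \eqref{system} the trajectory must then coincide for all earlier times with the rigid-translation trajectory of Definition~\ref{def:consensuspoint}, which lies in the consensus manifold. This would force $V(0)=0$, contradicting the hypothesis. Hence $V(t)>0$ for all $t\geq 0$, and $\sqrt{V(t)}$ is $C^1$.

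With positivity in hand, the rest is just one line. The chain rule yields
\begin{equation*}
\frac{d}{dt}\sqrt{V(t)} \;=\; \frac{\dot V(t)}{2\sqrt{V(t)}},
\end{equation*}
and plugging in the upper bound $\dot V(t)\leq -2\,a(\sqrt{2NX(t)})\,V(t)$ from Lemma~\ref{lem:app1} gives
\begin{equation*}
\frac{d}{dt}\sqrt{V(t)} \;\leq\; \frac{-2\,a(\sqrt{2NX(t)})\,V(t)}{2\sqrt{V(t)}} \;=\; -a\!\left(\sqrt{2NX(t)}\right)\sqrt{V(t)},
\end{equation*}
using that $a\geq 0$ to preserve the inequality through the division by a positive quantity. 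This is precisely the claimed estimate.

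The only real obstacle is the positivity argument above; it is a soft uniqueness-of-ODE observation but must be stated carefully since without it the formal chain-rule manipulation is meaningless at points where $V$ might vanish. Once positivity is established, the proof is a one-line consequence of Lemma~\ref{lem:app1}.
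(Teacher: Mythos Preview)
Your proof is correct and follows exactly the paper's approach: apply the chain rule $\frac{d}{dt}\sqrt{V(t)} = \dot V(t)/(2\sqrt{V(t)})$ and invoke Lemma~\ref{lem:app1}. Your additional justification that $V(t)>0$ for all $t\geq 0$ via Remark~\ref{rem:1} is a welcome bit of rigor that the paper's one-line proof leaves implicit.
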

\begin{proof}
It is sufficient to remark that
$$
\frac{d}{dt} \sqrt{V(t)} = \frac{1}{2\sqrt{V(t)}} \frac{d}{dt} V(t)
$$
and apply Lemma~\ref{lem:app1}.
\end{proof}

\begin{lemma}\label{lem:app3} 
For every $t \geq 0$
$$
\frac{d}{dt} \sqrt{X(t)} \leq \sqrt{V(t)}
$$
\end{lemma}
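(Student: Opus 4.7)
The plan is to differentiate $X(t)$ directly, apply the Cauchy--Schwarz inequality for the bilinear form $B$, and then use the chain rule.

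First I would compute $\frac{d}{dt}X(t)$ using the fact that $\dot x_i = v_i$. Since $X(t) = B(x(t),x(t))$ and $B$ is a symmetric bilinear form, we have
$$
\frac{d}{dt} X(t) = 2 B(x(t), \dot x(t)) = 2 B(x(t), v(t)).
$$
This is the simple but crucial identity: the derivative of the consensus-measuring functional of $x$ is governed by the cross pairing of $x$ with $v$.

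Next, I would invoke the Cauchy--Schwarz inequality for $B$. Recall that $B$ restricted to $\mathcal{V}_\perp \times \mathcal{V}_\perp$ coincides (up to the factor $1/N$) with the Euclidean inner product on $(\mathbb{R}^d)^N$, and that $B(u,v) = B(u_\perp, v_\perp)$ for all $u,v$. Hence $B$ is a genuine inner product on $\mathcal{V}_\perp$, and so the Cauchy--Schwarz inequality yields
$$
B(x(t), v(t)) = B(x_\perp(t), v_\perp(t)) \leq \sqrt{B(x(t),x(t))}\,\sqrt{B(v(t),v(t))} = \sqrt{X(t)}\,\sqrt{V(t)}.
$$

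Finally, I would combine these two facts with the chain rule. Provided $X(t) > 0$,
$$
\frac{d}{dt}\sqrt{X(t)} = \frac{1}{2\sqrt{X(t)}}\,\frac{d}{dt}X(t) = \frac{B(x(t),v(t))}{\sqrt{X(t)}} \leq \sqrt{V(t)}.
$$
The only minor subtlety is the case $X(t_0) = 0$, i.e., when all agents' main states coincide at some instant. This is a measure-zero issue that can be dispatched either by noting that $\sqrt{X(t)}$ is the supremum of the smooth functions $\sqrt{X(t)+\varepsilon}$ as $\varepsilon\downarrow 0$ and passing the differential inequality to the limit, or more directly by observing that $t \mapsto \sqrt{X(t)}$ is Lipschitz (since $\dot x_i = v_i$ is bounded on compact time intervals) and that at points where $X = 0$ the right derivative is $\leq \sqrt{V(t)}$ by an analogous bound $\frac{d}{dt}X \leq 2\sqrt{X}\sqrt{V}$. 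I do not expect any real obstacle here; the whole argument is essentially two lines plus the inner-product interpretation of $B$ on $\mathcal{V}_\perp$.
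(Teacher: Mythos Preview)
Your proof is correct and follows essentially the same route as the paper: differentiate $X$, apply Cauchy--Schwarz, then the chain rule. The only difference is cosmetic---the paper expands $B$ into sums over $\|x_{\perp_i}\|$ and $\|v_{\perp_i}\|$ and applies Cauchy--Schwarz twice (once in $\R^d$, once in $\R^N$), while you invoke it once directly on the inner product $B$ restricted to $\Vperp$; your treatment of the $X=0$ case is also slightly more careful than the paper's, which simply ignores it.
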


\begin{proof}
Note that for the conservation of the mean consensus parameter $\dot x_{\perp_{i}} = v_{\perp_{i}}$. So
$$
\frac{1}{N} \frac{d}{dt} \sum_{i=1}^{N} \|x_{\perp_{i}}\|^{2} = \frac{2}{N} \sum_{i=1}^{N} \langle x_{\perp_{i}},v_{\perp_{i}} \rangle \leq  \frac{2}{N} \sum_{i=1}^{N} \|x_{\perp_{i}}\| \|v_{\perp_{i}}\|.
$$
The sum in the last term is the scalar product on $\R^{N}$ between the two vectors with components $\|x_{\perp_{i}}\|$
 and $\|v_{\perp_{i}}\|$ respectively. Applying once more the Cauchy-Schwarz inequality, on the one hand we have
 $$
 \frac{d}{dt} X(t) \leq  \frac{2}{N} \left(\sum_{i=1}^{N} \|x_{\perp_{i}}\|^{2}\right)^{\frac{1}{2}} \left( \sum_{i=1}^{N} \|v_{\perp_{i}}\|^{2}\right)^{\frac{1}{2}} = 2 \sqrt{X(t)} \sqrt{V(t)}.
 $$
 On the other hand
 $$
  \frac{d}{dt} X(t) =  \frac{d}{dt} \left (\sqrt{X(t)}  \sqrt{X(t)} \right )  = 2  \sqrt{X(t)} \frac{d}{dt} \sqrt{X(t)} .
 $$
\end{proof}

\begin{lemma}\label{lem:app4}
For every $t \geq 0$
\begin{equation}\label{eq:1182}
\sqrt{V(t)} + \int_{\sqrt{X(0)}}^{{\sqrt{X(t)}}} a(\sqrt{2N}r) dr \leq \sqrt{V(0)}.
\end{equation}
\end{lemma}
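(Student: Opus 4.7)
The plan is to chain the two preceding lemmas so that the integrand $a(\sqrt{2N}r)$ appears naturally via a change of variables $r = \sqrt{X(s)}$.

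First, I dispose of the degenerate case $V(0)=0$. By Remark~\ref{rem:1}, the consensus manifold is invariant, so $V(t)\equiv 0$ for all $t\geq 0$. In that case each $\dot x_i(t)=\bar v$ is independent of $i$, so $x_{\perp}(t)=x_{\perp}(0)$ and hence $X(t)=X(0)$; the integral in~\eqref{eq:1182} vanishes and the inequality reads $0\leq 0$. So I may assume $V(0)>0$, and then $V(t)>0$ for all $t\geq 0$ by Remark~\ref{rem:1}, which makes $\sqrt{V(t)}$ smooth and legitimizes Lemma~\ref{lem:app2}.

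Next I combine Lemma~\ref{lem:app2} and Lemma~\ref{lem:app3}. Since $a\geq 0$, multiplying $\tfrac{d}{dt}\sqrt{X(t)}\leq \sqrt{V(t)}$ by $-a(\sqrt{2NX(t)})\leq 0$ reverses the inequality, giving
$$
-a(\sqrt{2NX(t)})\,\sqrt{V(t)} \;\leq\; -a(\sqrt{2NX(t)})\,\frac{d}{dt}\sqrt{X(t)}.
$$
Chaining this with Lemma~\ref{lem:app2} yields
$$
\frac{d}{dt}\sqrt{V(t)} \;\leq\; -a(\sqrt{2NX(t)})\,\frac{d}{dt}\sqrt{X(t)},
$$
so that
$$
\frac{d}{dt}\sqrt{V(t)} + a(\sqrt{2NX(t)})\,\frac{d}{dt}\sqrt{X(t)} \;\leq\; 0.
$$

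Finally I integrate this inequality from $0$ to $t$. The first term integrates to $\sqrt{V(t)}-\sqrt{V(0)}$, and for the second term, the substitution $r=\sqrt{X(s)}$ (valid because $\sqrt{X(\cdot)}$ is absolutely continuous by Lemma~\ref{lem:app3}) turns $a(\sqrt{2NX(s)})\,\tfrac{d}{ds}\sqrt{X(s)}\,ds$ into $a(\sqrt{2N}\,r)\,dr$, with limits $\sqrt{X(0)}$ and $\sqrt{X(t)}$. Rearranging gives~\eqref{eq:1182}. The only subtlety, and the one point that needs some care, is the legitimacy of the substitution when $\sqrt{X(\cdot)}$ is not monotone; but since the inequality under the integral sign holds pointwise almost everywhere before substitution, one can simply integrate in the original variable $s$ and then recognize the result as $\int_{\sqrt{X(0)}}^{\sqrt{X(t)}} a(\sqrt{2N}r)\,dr$ by the fundamental theorem applied to the antiderivative $r\mapsto \int_0^r a(\sqrt{2N}\rho)\,d\rho$ composed with $s\mapsto\sqrt{X(s)}$.
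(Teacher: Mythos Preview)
Your proof is correct and follows essentially the same route as the paper: chain Lemma~\ref{lem:app2} with Lemma~\ref{lem:app3}, then integrate and perform the substitution $r=\sqrt{X(s)}$. You are simply more careful than the paper about the degenerate case $V(0)=0$ and about justifying the change of variables via the antiderivative when $\sqrt{X(\cdot)}$ is not monotone.
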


\begin{proof}
By Lemma~\ref{lem:app2} we have that
$$
\sqrt{V(t)} - \sqrt{V(0)} \leq - \int_{0}^{t}  a \left(\sqrt{2NX(s)}\right) \sqrt{V(s)}ds.
$$
Now set $r = \sqrt{X(s)} $. By Lemma~\ref{lem:app3} $- \sqrt{V(s)}ds \leq - dr$ and, therefore,
$$
\sqrt{V(t)} - \sqrt{V(0)} \leq - \int_{\sqrt{X(0)}}^{{\sqrt{X(t)}}} a\left(\sqrt{2N}r\right) dr .
$$
\end{proof}

Let us now end the proof of Proposition~\ref{prop:condition-for-consensus}.
If $V(0) =0$ then the system would be already in a consensus situation. Let us assume then that $V(0) >0$.
Since 
\begin{equation}\label{eq:3434}
0< \sqrt{V(0)} \leq \int_{\sqrt{X(0)}}^{\infty} a\left(\sqrt{2N}r\right) dr,
\end{equation}
then there exists  $\bar X > X(0)$ such that
\begin{equation}\label{eq:8793}
\sqrt{V(0)} =  \int_{\sqrt{X(0)}}^{{\sqrt{\bar X}}} a\left(\sqrt{2N}r\right) dr.
\end{equation}
Note that if in~\eqref{eq:3434} the equality holds then by taking the limit on both sides of~\eqref{eq:1182} we have that $\lim_{t\to\infty} V(t) =0.$
Otherwise we claim that $X(t) \leq \bar X$ for every $t \geq 0$ and we prove it by contradiction. Indeed if
there exists 
$\bar t$ such that $X(\bar t) > \bar X$ then by Lemma~\ref{lem:app4}
$$
\sqrt{V(0)} \geq \sqrt{V(\bar t)} + \int_{\sqrt{X(0)}}^{{\sqrt{X( \bar t)}}} a(\sqrt{2N}r) dr > 
 \int_{\sqrt{X(0)}}^{{\sqrt{\bar X}}} a(\sqrt{2N}r) dr = \sqrt{V(0)},
$$
that is a contradiction.
 
\subsection{On the invariance of  $\mathcal{C}_{1}$}\label{sec:invariance}

Here we prove the following technical lemma showing, in particular, that a trajectory originating in the region $\mathcal{C}_{1}$, as defined in Remark~\ref{rk:Uxv}, remains in that region. In other words the region 
$\mathcal{C}_{1}$ is positively invariant for the dynamics of~\eqref{system}.

\begin{lemma}\label{lem:invariance}
Let $(x(t),v(t))$ be a solution of~\eqref{system}. Then for every $t\geq 0$ we have
$$
\frac{d}{dt} \left(\max_{1\leq i \leq N}\|v_{\perp_{i}}(t)\| - \gamma(B(x(t),x(t)))\right) \leq 0. 
$$
\end{lemma}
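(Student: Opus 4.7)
Let $M(t) := \max_{1\leq i\leq N}\|v_{\perp_i}(t)\|$. Since $M$ is the maximum of finitely many $C^1$ functions, it is locally Lipschitz, hence differentiable almost everywhere, and at any such point of differentiability one has $\dot M(t) = \langle v_{\perp_{i^*}},\dot v_{\perp_{i^*}}\rangle/\|v_{\perp_{i^*}}\|$ for any index $i^*$ that achieves the maximum (Danskin's envelope theorem); the case $M(t)=0$ is trivial since then $V(t)=0$. Using $\dot v_{\perp_{i^*}}=\dot v_{i^*}=\frac{1}{N}\sum_{j}a(\|x_j-x_{i^*}\|)(v_{\perp_j}-v_{\perp_{i^*}})$ and isolating the $j=i^*$ term (which vanishes), I would write
\[
\dot M(t)=\frac{1}{N}\sum_{j\neq i^*}a(\|x_j-x_{i^*}\|)\left(\frac{\langle v_{\perp_{i^*}},v_{\perp_j}\rangle}{\|v_{\perp_{i^*}}\|}-\|v_{\perp_{i^*}}\|\right).
\]

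The first key step is to argue that each summand is nonpositive: by Cauchy--Schwarz and maximality of $\|v_{\perp_{i^*}}\|$, one has $\langle v_{\perp_{i^*}},v_{\perp_j}\rangle/\|v_{\perp_{i^*}}\|\leq \|v_{\perp_j}\|\leq \|v_{\perp_{i^*}}\|$. Combined with the bound $\|x_j-x_{i^*}\|\leq \sqrt{2NX(t)}$ (as in the proof of Lemma \ref{lem:app1}) and the fact that $a$ is nonincreasing and nonnegative, so that $a(\|x_j-x_{i^*}\|)\geq a(\sqrt{2NX(t)})$, I can replace the coefficients $a(\|x_j-x_{i^*}\|)$ by $a(\sqrt{2NX(t)})$ while preserving the inequality direction (multiplying a nonpositive quantity by a larger nonnegative number makes it smaller). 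Using $\sum_{j=1}^N v_{\perp_j}=0$ one gets $\sum_{j\neq i^*}\langle v_{\perp_{i^*}},v_{\perp_j}\rangle=-\|v_{\perp_{i^*}}\|^2$, which telescopes the bound to
\[
\dot M(t)\leq -a(\sqrt{2NX(t)})\|v_{\perp_{i^*}}\|=-a(\sqrt{2NX(t)})M(t)\leq -a(\sqrt{2NX(t)})\sqrt{V(t)},
\]
where the last step uses $V(t)=\tfrac1N\sum_i\|v_{\perp_i}\|^2\leq M(t)^2$.

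For the second term, $\gamma'(X)=-a(\sqrt{2NX})/(2\sqrt X)$ by the fundamental theorem of calculus applied to \eqref{defgamma}, and a direct computation gives $\dot X(t)=2B(x(t),v(t))$, so by Cauchy--Schwarz on the bilinear form $B$ one has $\dot X(t)\leq 2\sqrt{X(t)V(t)}$. Consequently,
\[
\frac{d}{dt}\gamma(X(t))=-\frac{a(\sqrt{2NX(t)})}{2\sqrt{X(t)}}\dot X(t)\geq -a(\sqrt{2NX(t)})\sqrt{V(t)}.
\]
Subtracting the two bounds yields $\dot M(t)-\frac{d}{dt}\gamma(X(t))\leq 0$ almost everywhere, and since both $M$ and $t\mapsto\gamma(X(t))$ are absolutely continuous this integrates to the claimed monotonicity.

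The only delicate point I foresee is the treatment of the max: one must justify that $\dot M$ can be computed by picking any maximizer, which is standard but needs a sentence, and one should also record that the inequalities above coincidentally use the \emph{same} scalar $a(\sqrt{2NX(t)})\sqrt{V(t)}$ as the common bridge between the two sides---this alignment, coming from the fact that $\|x_j-x_{i^*}\|\leq\sqrt{2NX}$ and $\sqrt V\leq M$ against $\dot X\leq 2\sqrt{XV}$, is what makes the lemma work, and is the main content of the argument.
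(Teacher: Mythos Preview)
Your argument is correct and follows essentially the same route as the paper's proof: both compute $\frac{d}{dt}\|v_{\perp_{i^*}}\|$ via the dynamics, use the nonpositivity of each summand together with $a(\|x_j-x_{i^*}\|)\geq a(\sqrt{2NX})$ and $\sum_j v_{\perp_j}=0$ to obtain $\dot M\leq -a(\sqrt{2NX})M$, and then combine this with $\frac{d}{dt}\sqrt{X}\leq\sqrt{V}\leq M$ (Lemma~\ref{lem:app3}) and $\frac{d}{dt}\gamma(X)=-a(\sqrt{2NX})\frac{d}{dt}\sqrt{X}$. Your treatment is in fact slightly more careful than the paper's, since you explicitly address the a.e.\ differentiability of the maximum via Danskin's theorem and justify the direction of the inequality when replacing the coefficients---points the paper leaves implicit.
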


\begin{proof}
Fix $t \geq 0$. Let $i\in \{1,\ldots,N\}$ be the index such that 
$$
\|v_{\perp_{i}}(t)\| \geq \|v_{\perp_{j}}(t)\| \quad \forall j=1,\ldots,N.
$$
Let us omit the dependence on $t$ for the sake of readability.  We have that 
\begin{align*}
\frac{d}{dt} \|v_{\perp_{i}}\|
&=  \frac{ \langle \dot v_{\perp_{i}}, v_{\perp_{i}}\rangle}{\|v_{\perp_{i}}\|} \\
& = \frac{ \langle \dot v_{i}, v_{\perp_{i}}\rangle}{\|v_{\perp_{i}}\|}\\
& = \frac{1}{N} \sum_{j=1}^N a(\Vert x_j-x_i\Vert) \frac{\langle v_j-v_i , v_{\perp_{i}}\rangle}{\|v_{\perp_{i}}\|}\\
& = \frac{1}{N} \sum_{j=1}^N a(\Vert x_j-x_i\Vert) \frac{\langle v_{\perp_{j}}-v_{\perp_{i}}, v_{\perp_{i}}\rangle}{\|v_{\perp_{i}}\|}\\
& = \frac{1}{N} \sum_{j=1}^N a(\Vert x_j-x_i\Vert) \left( \frac{\langle v_{\perp_{j}}, v_{\perp_{i}}\rangle}{\|v_{\perp_{i}}\|}
 - \|v_{\perp_{i}}\| \right)\\
 & \leq 
\frac{1}{N} a(\sqrt{2NX}) \sum_{j=1}^N \left( \frac{\langle v_{\perp_{j}}, v_{\perp_{i}}\rangle}{\|v_{\perp_{i}}\|}
 - \|v_{\perp_{i}}\| \right) \\
 & = - a(\sqrt{2NX})\|v_{\perp_{i}}\|,
 \end{align*}
since 
$$
\sum_{j=1}^{N} v_{\perp_{j}} =0,
$$
and 
$\|x_{k} - x_{j}\| \leq \sqrt{2NX}$.

On the other hand note that, by Lemma~\ref{lem:app3}, we have
$
\frac{d}{dt}\sqrt{X} \leq \|v_{\perp_{i}}\|.
$
In particular, since
$$
\frac{d}{dt} \gamma(X) = -a(\sqrt{2NX}) \frac{d}{dt}\sqrt{X},
$$
one has that
$$
\frac{d}{dt} (\|v_{\perp_{i}}\| - \gamma(B(x,x))) = 
\frac{d}{dt} \|v_{\perp_{i}}\| + a(\sqrt{2NX}) \frac{d}{dt}\sqrt{X} \leq 
- a(\sqrt{2NX})\|v_{\perp_{i}}\| + a(\sqrt{2NX})\|v_{\perp_{i}}\| =0,
$$
which concludes the proof.
\end{proof}

\subsection{A technical Lemma}\label{appendix:techlemm}

We state the following useful technical lemma, used in the proof of Theorem \ref{thm:main}.

\begin{lemma}\label{lem:integrate}
Let $(x(\cdot), v(\cdot))$ be a solution of~\eqref{eq:inclusion}. If there exist $\alpha >0$ and $T>0$ such that
\begin{equation}\label{eq:1235}
\frac{d}{dt} V(t) \leq - \alpha \sqrt{V(t)},
\end{equation}
for almost every $t \in[0,T]$, then 
\begin{equation}\label{eq:Kappa}
V(t) \leq \left(\sqrt{V(0)} - { \frac{\alpha}{2} } t\right)^{2},
\end{equation}
and
\begin{equation}\label{eq:Ics}
X(t) \leq 2 X(0) + \frac{2 N^2}{\alpha^{2}} V(0)^{2}\,.
\end{equation}
\end{lemma}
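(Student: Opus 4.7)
The strategy is to reduce both estimates to elementary scalar ODE comparisons: first convert the differential inequality on $V$ into a linear-in-time decay of $\sqrt{V}$, then feed this decay into Lemma~\ref{lem:app3} to control $\sqrt{X}$.

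\textbf{Step 1: Linear decay of $\sqrt{V}$.} Let $t^* = \inf\{t \in [0,T] : V(t) = 0\}$, with $t^* = T$ if $V$ is strictly positive on $[0,T]$. On $[0,t^*)$, $V$ is positive and absolutely continuous, so $\sqrt{V}$ is absolutely continuous and $\frac{d}{dt}\sqrt{V(t)} = \frac{1}{2\sqrt{V(t)}}\frac{d}{dt}V(t) \leq -\frac{\alpha}{2}$ for a.e.\ $t \in [0,t^*)$. Integrating yields $\sqrt{V(t)} \leq \sqrt{V(0)} - \frac{\alpha}{2}t$ for $t \in [0,t^*)$. Since~\eqref{eq:1235} forces $V$ to be nonincreasing, one has $V(t) = 0$ for all $t \in [t^*, T]$. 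In both cases, $\sqrt{V(t)} \leq \max\{\sqrt{V(0)} - \tfrac{\alpha}{2} t, 0\}$, and squaring gives \eqref{eq:Kappa} (the bound holds trivially on $[t^*, T]$ since the right-hand side of \eqref{eq:Kappa} is nonnegative).

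\textbf{Step 2: Integrated bound on $\sqrt{X}$.} By Lemma~\ref{lem:app3}, $\frac{d}{dt}\sqrt{X(t)} \leq \sqrt{V(t)}$, so integrating and invoking Step 1,
\begin{equation*}
\sqrt{X(t)} \;\leq\; \sqrt{X(0)} + \int_{0}^{t} \sqrt{V(s)}\, ds \;\leq\; \sqrt{X(0)} + \int_{0}^{2\sqrt{V(0)}/\alpha} \left(\sqrt{V(0)} - \tfrac{\alpha}{2} s\right) ds \;=\; \sqrt{X(0)} + \frac{V(0)}{\alpha},
\end{equation*}
where in the middle inequality the integrand is extended by $0$ beyond $s = 2\sqrt{V(0)}/\alpha$ (the decay time) and the upper limit of integration is enlarged accordingly.

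\textbf{Step 3: Squaring and concluding \eqref{eq:Ics}.} Squaring the previous inequality and applying $(a+b)^2 \leq 2a^2 + 2b^2$ gives $X(t) \leq 2 X(0) + \frac{2}{\alpha^2} V(0)^2 \leq 2 X(0) + \frac{2N^2}{\alpha^2} V(0)^2$, since $N \geq 1$. This yields \eqref{eq:Ics}.

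\textbf{Main obstacle.} The only delicate point is handling the possibility that $V$ vanishes in finite time, which would otherwise cause the chain-rule computation $\frac{d}{dt}\sqrt{V}$ to break down; the monotonicity of $V$ inherited from \eqref{eq:1235} disposes of this via the splitting at $t^*$. Everything else is direct integration of elementary scalar inequalities.
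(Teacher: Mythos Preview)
Your argument is correct. Step~1 coincides with the paper's derivation of \eqref{eq:Kappa}, with the added care of splitting at the first vanishing time of $V$, which the paper glosses over.

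For \eqref{eq:Ics} your route differs from the paper's. The paper estimates each pairwise distance individually via $\|x_i(t)-x_j(t)\|\leq\|x_i(0)-x_j(0)\|+\int_0^t\|v_i-v_j\|\,ds$ together with the crude bound $\|v_i-v_j\|\leq\sqrt{2}\,N\sqrt{V}$; the factor $N^2$ in \eqref{eq:Ics} comes precisely from this pairwise estimate. You instead pass through the aggregate inequality $\frac{d}{dt}\sqrt{X}\leq\sqrt{V}$ of Lemma~\ref{lem:app3}, which yields the sharper bound $X(t)\leq 2X(0)+\tfrac{2}{\alpha^2}V(0)^2$ before you invoke $N\geq 1$ to recover the stated form. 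Your approach is shorter and gives a better constant; the paper's has the minor advantage of being entirely self-contained. One caveat worth recording: Lemma~\ref{lem:app3} is stated in the Appendix for solutions of the \emph{uncontrolled} system~\eqref{system}, whereas here $(x,v)$ solves the differential inclusion~\eqref{eq:inclusion}. This is harmless because the proof of Lemma~\ref{lem:app3} uses only $\dot x_{\perp_i}=v_{\perp_i}$, which follows from $\dot x_i=v_i$ and hence holds for~\eqref{eq:inclusion} as well; you should make this explicit when invoking the lemma.
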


\begin{proof}
Let us remind that 
$$
X(t) = \frac{1}{2 N^2} \sum_{i,j=1}^N \|x_{i}(t) - x_{j}(t)\|^{2} \mbox{ and } V(t) = \frac{1}{2 N^2} \sum_{i,j=1}^N \|v_{i}(t) - v_{j}(t)\|^{2}.
$$
Integrating~\eqref{eq:1235} one has
$$
\int_{0}^{t} \frac{\dot{V}(s)}{\sqrt{V(s)}} ds \leq -\alpha t,
$$
and
$$
\sqrt{V(t)} - \sqrt{V(0)} =  \frac{1}{2}\int_{0}^{t} \frac{\dot{V}(s)}{\sqrt{V(s)}} ds \leq -\frac{\alpha}{2} t, 
$$
hence \eqref{eq:Kappa} follows.
For every $i,j \in \{1,\ldots, N\}$ we have
\begin{align*}
\|x_{i}(t) - x_{j}(t)\| & \leq \|x_{i}(0) - x_{j}(0)\| + \int_{0}^{t}\|v_{i}(s) - v_{j}(s)\| ds \\
& \leq  \|x_{i}(0) - x_{j}(0)\| + \sqrt{2}N \int_{0}^{t}\sqrt{V(s)} ds.
\end{align*}
Notice that here we used the estimate $\|v_{i}(s) - v_{j}(s)\|^2 \leq 2 N^2 \left( \frac{1}{2 N^2} \sum_{\ell,m=1}^N \|v_{\ell}(s) - v_{m}(s)\|^2  \right )=2 N^2 V(t)$.
Equation~\eqref{eq:1235}  implies also 
$$
\int_{0}^{t}\sqrt{V(s)} ds  \leq - \frac{1}{ \alpha}(V(t) - V({0})) < \frac{1}{ \alpha} V({0}).
$$
 Therefore, using the estimates as before, we have
\begin{eqnarray*}
X(t)= \frac{1}{2N^{2}} \sum_{i,j=1}^N \|x_{i}(t) - x_{j}(t)\|^{2} &\leq& \frac{1}{2N^{2}} \sum_{i,j=1}^N2 \left( \|x_{i}(0) - x_{j}(0)\|^{2} +
 \left ( \int_0^t \|v_{i}(s) - v_{j}(s)\| ds \right )^2 \right)\\
&\leq & \frac{1}{2N^{2}} \sum_{i,j=1}^N \left ( 2 \|x_{i}(0) - x_{j}(0)\|^{2} + 4 N^2 \left(  \int_0^t  \sqrt{V(s)} ds \right )^2 \right)\\
&\leq & 2 \left( \frac{1}{2N^{2}} \sum_{i,j=1}^N  \|x_{i}(0) - x_{j}(0)\|^{2} \right)  + 2 \left ( \sum_{i,j=1}^N \frac{V(0)^2}{\alpha^2} \right) \\
&=& 2 X(0) +\frac{2 N^2}{\alpha^2} V(0)^2.
\end{eqnarray*}
\end{proof}

\bibliographystyle{abbrv}
\bibliography{biblioflock}

\end{document}